\providecommand{\bbN}{{\mathbb N}} 
\providecommand{\Syl}{{\mathrm{Syl}}} 
\providecommand{\N}{{\mathrm{N}}} 
\providecommand{\C}{{\mathrm{C}}}
\providecommand{\Z}{{\mathrm{Z}}}
\providecommand{\rmO}{{\mathrm{O}}}
\providecommand{\F}{{\mathrm{F}}} 
\providecommand{\teq}{{\ \trianglelefteq \ }}
\providecommand{\gen}[1]{\langle #1 \rangle}
\providecommand{\gen}[1]{\langle #1 \rangle}
\providecommand{\gen}[1]{\langle #1 \rangle}
\providecommand{\wtG}{{\widetilde{G}}}
\providecommand{\Ind}{{\operatorname{Ind}}}
\providecommand{\bbN}{\mathbb{N}}
\newtheorem{lem}{Lemma}
\newtheorem*{theorem}{Theorem}
\newtheorem*{cor}{Corollary}
\newtheorem{question}{Question}
\title{On $A$-Groups with the Same Index Set as a Nilpotent Group}
\author{
Wei Zhou\thanks{Sobolev Institute of Mathematics, Novosibirsk 630090, Russia.
Email: zhouwyx@outlook.com.} 
\footnotemark[4]
,
Ilya Gorshkov\thanks{School of Mathematical Sciences, Hebei Key Laboratory of Computational Mathematics and Applications, Hebei Normal University, Shijiazhuang 050024, P. R. China; Novosibirsk State Technical University, Novosibirsk 630073 Russia. Email: ilygor8@gmail.com.}
\thanks{The author was supported by the grant of The Natural Science Foundation of Hebei Province (Project No. A2023205045).} 
\footnotemark[4]
}
\date{}
\begin{document}

\pagenumbering{arabic}

\maketitle

\footnotetext[4]{The work is supported by the Mathematical Center in Akademgorodok under the agreement No. 075-15-2025-348 with the Ministry of Science and Higher Education of the Russian Federation.}

\noindent\textbf{Abstract.} 
    Let $G$ be a finite group and 
    $\N(G)$ be the set of conjugacy class sizes of $G$.
    For a prime $p$, 
    let $|G||_p$ be the highest $p$-power dividing some element of $\N(G)$.
    and define $|G||=\Pi_{p\in \pi (G)}|G||_p$.
    $G$ is said to be an $A$-group if all its Sylow subgroups are abelian.
    We prove that if $G$ is an $A$-group
    such that 
    $\N(G)$ contains $|G||_p$ for every $p\in \pi(G)$ as well as $|G||$,
    then $G$ must be abelian.
    This result gives a positive answer to a question 
    posed by Camina and Camina in 2006.

\noindent\textbf{2020 Mathematics Subject Classification}: 20D15, 20D60.

\noindent\textbf{Keywords:} finite group, $A$-group, conjugacy class.

\section*{Introduction}

In this paper, all groups considered are finite, 
and $p$ always denotes a prime number.
Let $x$ be an element of a group $G$.
The index of $x$ in $G$ is given by $|G:\C_G(x)|$ and is denoted by $\Ind(G,x)$.
Note that $\Ind(G,x)$ is the size of the conjugacy class of $x$.
Define $\N(G)$ as the set of conjugacy class sizes of all elements in $G$,
i.e., $\N(G)=\{ \Ind(G,x) \mid x\in G \}$.
For convenience, we refer to $\N(G)$ as the index set of $G$.
Let $|G||_p$ denote the highest $p$-power $p^n$ which divides some element of $\N(G)$.
Furthermore, we write $\pi(G)$ for the set of all prime divisors of $|G|$,
and define $|G|| = \Pi_{p\in \pi(G)}|G||_p$.
Note that $|G||_p$ and $|G||$ are not necessarily contained in $\N(G)$.

Many researchers have been interested in using $\N(G)$ to characterize the structure of $G$. 
It\^o proved that if $\N(G) = \{1, n\}$, 
then $G$ must be the direct product of a $p$-group and an abelian $p'$-group \cite{Ito1953}. 
Ishikawa further showed that the nilpotency class of such groups is at most 3 \cite{Ishikawa2002}. 
More related results can be found in \cite{Camina2011}.

Let $\pi(G)=\{p_1, p_2,\ldots, p_k\}$.
It is easy to verify that if $G$ is nilpotent, 
then $\N(G) = \N(P_1) \times \N(P_2) \times \ldots \times \N(P_k)$, 
where $P_i\in \Syl_{p_i}(G)$, $1\leq i \leq k$.
Note that each $\N(P_i)$ consists of powers of $p_i$, including $1$. 
In \cite{Cossey2000}, Cossey proved that 
every finite set of $p$-powers containing $1$
can be an index set of some $p$-group.
Therefore, for any set $\Omega=\Omega_1 \times \Omega_2 \times \dots \times \Omega_r$,
where each $\Omega_i$ is a finite set of $p_i$-powers containing $1$, and $p_1, p_2, \dots, p_r$ are distinct primes, there exists a nilpotent group $G$ such that $N(G)=\Omega$.
It is natural to ask: 

\begin{question}[{\cite[Question 1]{Camina2006}}]
\label{Q1}
    If a group $G$ has the same index set as a nilpotent group, 
    that is,  
    $N(G) = \Omega_1 \times \Omega_2 \times \dots \times \Omega_r$,
    where each $\Omega_i$ is a finite set of $p_i$-powers containing $1$, 
    and $p_1, p_2, \dots, p_r$ are distinct primes, 
    does it follow that $G$ is nilpotent?
\end{question}

    It is not true in general,
    as some counterexamples are provided in \cite{Camina2006}.
    However, Question $\ref{Q1}$ has a positive answer in some special cases.
    For example,  
    if $\N(G)=\{1,p_1^{m_1}\}\times \{1,p_2^{m_2}\}\times \cdots \times \{1,p_k^{m_k}\}$,
    where $p_1^{m_1},p_2^{m_2},\ldots,p_k^{m_k}$ are powers of distinct primes,
    then $G$ is nilpotent \cite{Casolo2012}.
    More generally, 
    if $\N(G) = \{1, n_1\} \times \{1, n_2\} \times \cdots \times \{1, n_r\}$, 
    where $n_1, n_2, \ldots, n_r$ are pairwise coprime integers, 
    then $G$ is nilpotent \cite{Gorshkov2025}.
    Recall that a finite group is called an $A$-group
    if all its Sylow subgroups are abelian.
    Camina and Camina posed the following question and conjectured that the answer should be positive.  
    
\begin{question}[{\cite[Question 2]{Camina2006}}]
\label{Q2}
    Let $G$ and $H$ be finite groups 
    with $H$ nilpotent and $G$ an $A$-group.
    If $\N(G)=\N(H)$, must $G$ be nilpotent (and hence abelian)?
\end{question}

    Camina and Camina proved that if $G$ is additionally metabelian 
    (i.e., its commutator subgroup $G'$ is abelian), 
    then $G$ must be abelian \cite{Camina2006}.
    In this paper, we will completely resolve Question \ref{Q2}.
    In fact, we prove the following more general result.

\begin{theorem}
    Let $G$ be an $A$-group such that $\N(G)$ contains $|G||_p$ for all $p\in \pi(G)$ and $|G||$. Then $G$ is an abelian group.
\end{theorem}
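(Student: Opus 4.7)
The strategy is to show directly that any element $x$ with $\Ind(G,x)=|G||$ must lie in $\Z(G)$; once this is achieved, $|G||=\Ind(G,x)=1$, so every $|G||_p=1$, and together with the $A$-group property this forces every Sylow subgroup of $G$ into $\Z(G)$, making $G$ abelian.

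The backbone is the basic $A$-group fact: every $p$-element $g\in G$ lies in some abelian $P\in\Syl_p(G)$, which therefore centralizes it, so the $p$-part of $\Ind(G,g)$ is trivial. A first consequence is that whenever $\Ind(G,y)=p^a$ is a pure $p$-power, the $p$-primary component $y_p$ lies in $\Z(G)$: writing $y=y_py_{p'}$ with commuting prime-power parts, $\C_G(y)\leq \C_G(y_p)$ combined with $|\C_G(y)|_{p'}=|G|_{p'}$ forces $|\C_G(y_p)|_{p'}=|G|_{p'}$, while $|\C_G(y_p)|_p=|G|_p$ by the previous remark, so $\C_G(y_p)=G$. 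Using the hypothesis, for each $p\in \pi(G)$ I then select a $p'$-element $a_p\in G$ with $\Ind(G,a_p)=|G||_p$ (replacing any realizing element by its $p'$-part), and I fix $x\in G$ with $\Ind(G,x)=|G||$.

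Writing $x=\prod_{p\in\pi(G)} x_p$ in prime-power components, I observe that $\C_G(x_{p'})$ is itself an $A$-group (subgroups of $A$-groups are $A$-groups) and $x_p$ is a $p$-element of it, so the index $[\C_G(x_{p'}):\C_G(x)]$---which equals the size of the $\C_G(x_{p'})$-class of $x_p$---is coprime to $p$. Hence the $p$-part of $\Ind(G,x_{p'})$ equals that of $\Ind(G,x)$, namely $|G||_p$, so the $p'$-part of $x$ already witnesses the maximal $p$-class size. The crux, and main obstacle, is to deduce from this simultaneous extremality that $x\in \Z(G)$. My plan is a prime-by-prime coprime-action analysis: the $p'$-element $x_{p'}$ acts on a suitable $P\in\Syl_p(G)$ with $P=[P,x_{p'}]\times \C_P(x_{p'})$ and $|[P,x_{p'}]|=|G||_p$, matching the analogous decomposition for $a_p$. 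Combining these identifications across all primes with the structure of $\F(G)=\prod_p \opg_p(G)$ and the induced action of $G/\F(G)$, one aims to force each $x_p$ to centralize a full Sylow $p$-subgroup, yielding $x\in \Z(G)$.

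The delicate point is that the hypothesis on $\N(G)$ does not obviously pass to proper subgroups or quotients, so a naive induction on $|G|$ or $|\pi(G)|$ is blocked. I expect the argument to proceed by induction on the number of primes $p$ with $|G||_p>1$, with base case reducing (via the $A$-group structure) to It\^o's theorem on groups with only two class sizes, and the inductive step exploiting that each $a_p$ is centralized by a full Hall $p'$-subgroup of $G$ to decouple the prime-by-prime contributions. Once $x\in\Z(G)$ is established, the conclusion is immediate as noted in the first paragraph.
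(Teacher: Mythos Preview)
Your preliminary observations are sound: in an $A$-group every $p$-element has $p'$-index, so an element realizing $|G||_p$ may be taken to be a $p'$-element, and for the element $x$ realizing $|G||$ you correctly deduce $\Ind(G,x_{p'})_p=|G||_p$ for each $p$. But the argument stalls precisely at what you call the ``crux.'' The coprime decomposition $P=[P,x_{p'}]\times\C_P(x_{p'})$ presupposes that $x_{p'}$ normalizes some $P\in\Syl_p(G)$, which is false in general (a $3$-cycle in $S_3$ normalizes no Sylow $2$-subgroup); at best you can apply this to $\rmO_p(G)$, and then the equality $|[\rmO_p(G),x_{p'}]|=|G||_p$ is no longer automatic. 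More seriously, the proposed induction on the number of primes with $|G||_p>1$ is never set up: you yourself note that the hypotheses do not pass to subgroups or quotients, and nothing you write explains how ``decoupling'' via the Hall $p'$-centralizer of $a_p$ produces a smaller instance of the same problem. (The base case also does not reduce to It\^o's two-class theorem, since a single prime $p$ with $|G||_p>1$ still allows arbitrarily many $p$-power class sizes; it is salvageable by a short direct argument, but not as stated.) As written, the proposal is a sketch that identifies the obstacle without overcoming it.

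The paper's route is quite different and explains why your direct attack is hard. The key structural input is that for an $A$-group one has $\N(G)=\N(\F(G)\rtimes G/\F(G))$, so one may replace $G$ by the split extension $F\rtimes T$ with $F=\F(G)$ and $\Z(G)=1$. In this model every element is conjugate to a commuting product $ab$ with $a\in F$, $b\in T$, and one shows (using the hypothesis that each $|G||_p\in\N(G)$) that $|G||_p=|T|_p\cdot\max_{g}\Ind(F,g)_p$, with the two factors realized respectively by an element of $F$ and an element of $T$. Taking $h=ab$ with $\Ind(G,h)=|G||$ and a prime $p$ dividing $|b|$, a short centralizer count then forces every $p$-element of $\C_G(ab)$ into $F$, contradicting $b_p\in\C_G(ab)\cap T$. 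The reduction to the split case is what makes the bookkeeping tractable; without it, there is no evident way to separate the ``$F$-part'' and ``$T$-part'' of an index, which is exactly where your plan gets stuck.
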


    Since every nilpotent group satisfies the conditions of the theorem, 
    we can immediately get the following corollary,
    which provides a positive answer to Question \ref{Q2}.

\begin{cor}
    Let $G$ be an $A$-group 
    that shares the same index set with some nilpotent group. 
    Then $G$ must be abelian.
\end{cor}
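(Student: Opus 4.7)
The overall plan is to show that the hypotheses force $G$ to be nilpotent; since a nilpotent $A$-group is the direct product of its abelian Sylow subgroups, it is then automatically abelian. As a first step I would replace each realizer of $|G||_p$ by a $p'$-element with the same class size. Choose $x_p\in G$ with $\Ind(G,x_p)=p^{n_p}$ and write $x_p=u_pv_p$, where $u_p$ is the $p$-part and $v_p$ the $p'$-part of $x_p$. Because $u_p,v_p\in\langle x_p\rangle$ have coprime order, $\C_G(x_p)=\C_G(u_p)\cap\C_G(v_p)$, so $|\C_G(u_p):\C_G(x_p)|$ divides $\Ind(G,x_p)=p^{n_p}$, making $\Ind(G,u_p)$ a $p$-power. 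Yet $G$ is an $A$-group and $u_p$ is a $p$-element, so the abelian Sylow $p$-subgroup containing $u_p$ lies inside $\C_G(u_p)$; hence $\Ind(G,u_p)$ is coprime to $p$. Combining, $\Ind(G,u_p)=1$ and $u_p\in\Z(G)$. Replacing $x_p$ by $v_p$, I may assume each $x_p$ is a $p'$-element with $\Ind(G,x_p)=p^{n_p}$. Then $\C_G(x_p)$ contains a full Sylow $q$-subgroup for every $q\ne p$, so $x_p$ centralizes $\opg_q(G)$ for every $q\ne p$.

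Next I would analyze the element $y$ realizing $\Ind(G,y)=|G||$. Decomposing $y=\prod_p y_p$ gives $\C_G(y)=\bigcap_p\C_G(y_p)$, with each $\Ind(G,y_p)$ a $p'$-number by the $A$-group property. The divisibility $|\C_G(y_p):\C_G(y)|\mid|G||$, together with the maximality of each $|G||_q$, forces each $\Ind(G,y_p)$ to divide $|G||/p^{n_p}$, and combined with $\Ind(G,y)\mid\prod_p\Ind(G,y_p)$ this should pin down the class sizes of the $y_p$. Together with the $x_p$'s from Step~1, these elements provide substantial centralization data across the Fitting factors $\opg_q(G)$.

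The final and hardest step is to force $G=\F(G)$. Because $G$ is a solvable $A$-group, $\F=\F(G)$ is abelian with $\C_G(\F)=\F$, so $G/\F$ acts faithfully on $\F=\bigoplus_p\opg_p(G)$ and is itself an $A$-group. Suppose for contradiction $G/\F\ne1$; fix a prime $r$ dividing $|G/\F|$ and lift a non-trivial $r$-element of $G/\F$ to an $r$-element $g\in G$ acting non-trivially on some $\opg_s(G)$, $s\ne r$. I would combine the centralization properties of the $x_p$ with the element $y$ to exhibit a conjugacy class whose $r$-part exceeds $r^{n_r}$, contradicting the definition of $|G||_r$; this forces $G/\F=1$, so $G$ is nilpotent and hence abelian. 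The main obstacle is precisely this final construction: one must translate the combinatorial restrictions on $\N(G)$ into the rigid structural statement that the faithful action of $G/\F$ on $\F$ is trivial. A natural fallback is to first prove that $G$ is metabelian and then invoke the result of Camina and Camina, which handles the metabelian case.
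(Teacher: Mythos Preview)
Your proposal aims to prove the corollary from scratch, whereas the paper deduces it in one line from the main Theorem: if $H$ is nilpotent then $\N(H)=\N(P_1)\times\cdots\times\N(P_k)$, so every $|H||_p$ and $|H||$ lie in $\N(H)=\N(G)$, and the Theorem applies. So really you are attempting to reprove the Theorem, and it is fair to compare your sketch to the paper's proof of the Theorem.

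Your Step~1 is correct and is essentially the opening move the paper also needs (it appears in the paper's Lemma~\ref{primeiro} and~\ref{segundo} in a slightly different guise). The serious divergence is in how to carry out your Step~3. You acknowledge that the obstacle is ``precisely this final construction'' and leave it unspecified; this is a genuine gap, and the paper's main technical contribution is exactly the device that closes it. The paper proves (Lemmas~\ref{bingo} and~\ref{key}) that for an $A$-group one has $\N(G)=\N(\F(G)\rtimes G/\F(G))$, so one may \emph{assume} $G=F\rtimes T$ with $F=\F(G)$ abelian and $\Z(G)=1$. Only after this reduction does the decomposition of an arbitrary $g$ as $g=xy$ with $x\in F$, $y\in T$, $[x,y]=1$ (Lemma~\ref{CA}) become available, and with it the clean multiplicativity $\Ind(G,xy)=\Ind(G,x)\Ind(G,y)$ when $\C_G(x)\C_G(y)=G$. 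This is what lets the paper pin down the $p$-parts of the extremal class sizes (Lemmas~\ref{primeiro}, \ref{segundo}) and then derive the final contradiction by looking at the $p$-part of the complement element $b$ in $h=ab$.

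Without the splitting reduction, your plan to ``combine the $x_p$'s with $y$ to exhibit a class whose $r$-part exceeds $r^{n_r}$'' has no mechanism for controlling how the $F$-part and the $T$-part of an element interact inside centralizers; the primary decomposition $y=\prod y_p$ you use in Step~2 is the wrong one for this purpose, since the crucial factorization in the paper is along $F$ versus $T$, not along primes. Your fallback of reducing to the metabelian case and quoting Camina--Camina would require showing $G'$ is abelian, which is not obviously easier than the original problem and is not how the paper proceeds.
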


\section{Preliminaries}

\begin{lem}[\cite{GorA2}, Lemma 1.4]
\label{basic}
    For a finite group $G$, take $K\teq G$ and $x\in G$.
    The following claims hold:

    (i) $\Ind (K, x)$ and $\Ind(G/K, xK)$ divide $\Ind(G,x)$.
    
    (ii) If $y\in G$ with $xy=yx$ and $(|x|,|y|)=1$, 
    then $\C_G(xy)=\C_G(x)\cap \C_G(y)$.

    (iii) $\C_G(x)K/K\leq \C_{G/K}(xK)$.

    (iv) If $(|x|, |K|)=1$, then $\C_G(x)K/K = \C_{G/K}(xK)$. 
\end{lem}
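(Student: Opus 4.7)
The plan is to address the four claims in the order (iii), (i), (ii), (iv). The first three are short verifications, while (iv) is the substantive statement; since (iv) sharpens (iii) under a coprime hypothesis, I leave it for last.

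Claim (iii) is immediate from the definition: if $g\in \C_G(x)$ then $gx=xg$, so $(gK)(xK)=gxK=xgK=(xK)(gK)$, giving $\C_G(x)K/K\le \C_{G/K}(xK)$. For (i), since $\C_K(x)=K\cap \C_G(x)$, the second isomorphism theorem yields $\Ind(K,x)=|K:K\cap \C_G(x)|=|K\,\C_G(x):\C_G(x)|$, which divides $|G:\C_G(x)|=\Ind(G,x)$; similarly, (iii) gives $\Ind(G/K,xK) \mid |G/K:\C_G(x)K/K|=|G:\C_G(x)K|$, which divides $|G:\C_G(x)|$. For (ii), the inclusion $\C_G(x)\cap \C_G(y)\subseteq \C_G(xy)$ is immediate; conversely, since $x,y$ commute and $(|x|,|y|)=1$, choose $a$ with $a|y|\equiv 1 \pmod{|x|}$, so that $(xy)^{a|y|}=x^{a|y|}y^{a|y|}=x$. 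Thus $x\in \gen{xy}$, and symmetrically $y\in \gen{xy}$; any element centralizing $xy$ therefore centralizes both $x$ and $y$.

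Part (iv) is where the coprimality hypothesis must actually be used. By (iii), it suffices to show $\C_{G/K}(xK)\subseteq \C_G(x)K/K$. Fix $gK\in \C_{G/K}(xK)$ and set $y:=gxg^{-1}\in xK$, noting $|y|=|x|=:n$. Let $H:=\gen{x}K$; since $|\gen{x}\cap K|$ divides $\gcd(n,|K|)=1$, both $\gen{x}$ and $\gen{y}$ are complements to $K$ in $H$. Because $H/K$ is cyclic and therefore solvable, the conjugacy part of Schur--Zassenhaus produces $h\in H$ with $h\gen{x}h^{-1}=\gen{y}$. Writing $h=x^i k$ with $k\in K$, we have $\overline{hxh^{-1}}=\bar x^i \bar x \bar x^{-i}=\bar x$ in $H/K$; but $hxh^{-1}$ generates $\gen{y}$, so $hxh^{-1}=y^j$ for some $j$ with $\gcd(j,n)=1$, whose image in $H/K$ is $\bar x^j$. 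Equating forces $\bar x^{j-1}=1$, so $n\mid j-1$, and in the range $1\le j\le n$ this forces $j=1$. Hence $hxh^{-1}=y=gxg^{-1}$, so $g^{-1}h\in \C_G(x)$; writing $g=hc^{-1}=x^i k c^{-1}=(x^i c^{-1})(ckc^{-1})$ with $c\in \C_G(x)$, and noting $x^i c^{-1}\in \C_G(x)$ while $ckc^{-1}\in K$, we conclude $g\in \C_G(x)K$.

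The main obstacle is (iv). A priori, the condition $gxg^{-1}\in xK$ gives essentially no control over how conjugation by $g$ moves $x$ within the coset $xK$, and the coprime hypothesis $(|x|,|K|)=1$ must be leveraged to pin $gxg^{-1}$ down up to $K$-conjugacy. Schur--Zassenhaus is precisely the mechanism that converts coprimality into a conjugacy statement about complements, and the rigidity supplied by the fact that a generator of $\gen{y}$ lying in the coset $xK$ is unique (namely $y$ itself) is what upgrades the $H$-conjugacy of the complements $\gen{x}$ and $\gen{y}$ into the desired equality $hxh^{-1}=y$.
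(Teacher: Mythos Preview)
Your proof is correct. Note, however, that the paper does not actually prove this lemma: it is quoted verbatim from \cite{GorA2} (Lemma~1.4) and stated without argument, so there is no ``paper's own proof'' to compare against.

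One minor remark on part~(iv): the conjugacy half of Schur--Zassenhaus in fact yields a conjugating element in the normal subgroup $K$ itself (write any $h\in H$ as $h=k'x^{i}$ and absorb $x^{i}$ into the complement). Taking $h\in K$ from the outset would let you skip the bookkeeping with $h=x^{i}k$ and the decomposition $g=(x^{i}c^{-1})(ckc^{-1})$; you would get $g=hc^{-1}\in K\,\C_G(x)=\C_G(x)K$ immediately. Your longer route is still valid, just slightly less direct.
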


\begin{lem}[\cite{Camina2006}, Lemma 2]
\label{CL2}
    Let $A$ be an abelian group acting faithfully on an abelian group $V$
    (written additively) of coprime order.
    Then $V$ has a regular orbit.
\end{lem}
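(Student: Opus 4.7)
The plan is to argue by induction on $|G|$, so suppose for contradiction that $G$ is a counterexample of minimum order: an $A$-group which is not abelian, yet $\N(G)$ contains $|G||_p$ for every $p \in \pi(G)$ and also $|G||$. Since a nilpotent $A$-group is a direct product of its abelian Sylow subgroups and hence abelian, $G$ is non-nilpotent; as an $A$-group, $G$ is nevertheless solvable.

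\textbf{Step 1 (Extracting $p'$-representatives).} The abelian Sylow hypothesis implies that for any $p$-element $x \in G$, a Sylow $p$-subgroup containing $x$ centralizes it, so $|G:\C_G(x)|$ is prime to $p$. Fix $g \in G$ with $|G:\C_G(g)| = |G||$ and write $g = g_p g_{p'}$ in its $p/p'$-primary decomposition. From $\C_G(g) = \C_G(g_p) \cap \C_G(g_{p'})$ and $|G:\C_G(g_p)|$ being a $p'$-number, the inequality $|G:\C_G(g)|_p \leq |G:\C_G(g_p)|_p \cdot |G:\C_G(g_{p'})|_p$ forces $|G:\C_G(g_{p'})|_p = |G||_p$ for every prime $p$. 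Thus the maximum $p$-part of a class index is already attained by a $p'$-element.

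\textbf{Step 2 (Inductive descent on a minimal normal subgroup).} Let $N$ be a minimal normal subgroup of $G$; by solvability it is elementary abelian of exponent some prime $q$. I would verify that $\overline{G} := G/N$ satisfies the hypothesis and invoke minimality of $G$ to conclude that $\overline{G}$ is abelian. For a prime $p \neq q$, Lemma~\ref{basic}(iv) applied to an element of $p$-power order realizing $|G||_p$ preserves the class size modulo $N$, so $|\overline{G}||_p = |G||_p \in \N(\overline{G})$, the reverse inequality following from Lemma~\ref{basic}(i). For $p = q$, the same lemma applies to the $q'$-element produced in Step~1 (whose order is coprime to $|N|$), giving $|\overline{G}||_q \in \N(\overline{G})$; an analogous argument locates an element realizing $|\overline{G}||$. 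By minimality, $\overline{G}$ is abelian, so $G' \leq N$; in particular $G'$ is an abelian $q$-group and $G$ is metabelian.

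\textbf{Step 3 (Regular orbit and contradiction).} Choose a Hall $q'$-subgroup $H$ of $G$ (exists by solvability). Since $H \cap N = 1$ and $H$ injects into the abelian quotient $\overline{G}$, the group $H$ is itself abelian. Let $K_0 = \C_H(N)$ be the kernel of the conjugation action of $H$ on $N$. If $K_0 = H$, then $N \leq \Z(HN)$ and $G = HN$ is nilpotent, hence, being an $A$-group, abelian, a contradiction. Therefore $H/K_0$ acts faithfully on the abelian $q$-group $N$, and these groups have coprime orders. By Lemma~\ref{CL2} there exists $v \in N$ with $\C_{H/K_0}(v) = 1$, equivalently $\C_H(v) = K_0$. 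Writing every element of $G$ as $nh$ with $n \in N$, $h \in H$, a direct computation gives $\C_G(v) = NK_0$, whence $|G:\C_G(v)| = |H:K_0|$. Since every class index of $G$ has $p$-part bounded by $|G||_p$ and therefore divides $|G||$, comparing $|H:K_0|$ against $|G||$ and against the $q$-power class index $|G||_q$ realized inside $N$ would yield the required contradiction.

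The step I expect to be the hardest is the descent of the hypothesis at the prime $q$ in Step~2: Lemma~\ref{basic}(iv) does not apply directly to $q$-elements because $N$ is itself a $q$-group, and it is precisely Step~1's reformulation --- realizing the maximum $q$-part of a class index by a $q'$-element --- that allows one to circumvent this obstruction and complete the induction.
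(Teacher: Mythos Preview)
Your proposal does not address the stated lemma at all. The statement in question is Lemma~\ref{CL2}: if an abelian group $A$ acts faithfully and coprimely on an abelian group $V$, then $V$ admits a regular $A$-orbit. In the paper this lemma is not proved; it is simply quoted from \cite{Camina2006}. What you have written is instead a sketch of a proof of the paper's \emph{main theorem} (that an $A$-group $G$ whose index set contains each $|G||_p$ and $|G||$ must be abelian). Indeed, in your Step~3 you explicitly invoke Lemma~\ref{CL2} as an already-established tool, so you are using the very statement you were asked to justify.

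If your intention was to prove Lemma~\ref{CL2} itself, you would need an argument about coprime actions on abelian groups (e.g.\ reducing to the case where $V$ is an irreducible $\mathbb{F}_pA$-module, where faithfulness forces $A$ to be cyclic and acts fixed-point-freely on nonzero vectors). None of your three steps touch this.

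As a side remark on your sketch of the main theorem: Step~2 contains a slip. You write ``Lemma~\ref{basic}(iv) applied to an element of $p$-power order realizing $|G||_p$'', but in an $A$-group any $p$-element has class index prime to $p$, so a $p$-element can never realize $|G||_p$ unless $|G||_p=1$. What you presumably want is an element whose class index \emph{equals} the $p$-power $|G||_p$; such an element need not be a $p$-element, and then Lemma~\ref{basic}(iv) does not apply directly. The transfer of the hypothesis to $G/N$ at the prime $q$ (and at other primes) is more delicate than your sketch suggests, which is part of why the paper takes a different route via the Fitting subgroup and Lemma~\ref{bingo}.
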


\begin{lem}[{\cite[Theorem 5.2.3]{Gorenstein}}]
\label{Go}
    Let $A$ be a $p'$-group of automorphisms of the abelian group $P$. 
    Then $P=\C_P(A)\times[P,A]$.
\end{lem}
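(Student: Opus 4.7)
I would argue by induction on $|G|$, assuming $G$ is a non-abelian $A$-group of minimum order satisfying the hypotheses.

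\emph{Reduction to $Z(G)=1$.} For a finite $A$-group one has $Z(G)\cap G'=1$ (a standard consequence of Taunt's structure theorem on $A$-groups, since the center is a direct factor). If $M\le Z(G)$ is any nontrivial subgroup and $g,x\in G$ satisfy $[g,x]\in M$, then $[g,x]\in M\cap G'=1$, hence $\C_{G/M}(xM)=\C_G(x)/M$ and $\N(G/M)=\N(G)$. In particular $|G/M||_p=|G||_p$ and $|G/M||=|G||$, so $G/M$ is a smaller $A$-group satisfying the hypotheses and is abelian by minimality. Then $G'\le M\le Z(G)$, making $G$ nilpotent; a nilpotent $A$-group is the direct product of its abelian Sylow subgroups and hence is itself abelian, contradicting the choice of $G$. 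Thus $Z(G)=1$.

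\emph{$p$-regular witnesses.} Fix $p\in\pi(G)$ and pick $x_p\in G$ with $\Ind(G,x_p)=|G||_p=p^{n_p}$; write $x_p=ab$ with $a$ its $p$-part and $b$ its $p'$-part. A Sylow $p$-subgroup containing $a$ is abelian and lies inside $\C_G(a)$, so $\Ind(G,a)$ is a $p'$-number; as it also divides $p^{n_p}$, it equals $1$ and $a\in Z(G)=1$. Hence $x_p$ is $p$-regular and $\C_G(x_p)$ contains a full Hall $p'$-subgroup of $G$. Applying the same decomposition to the element $y$ with $\Ind(G,y)=|G||$ shows that each Sylow component $y_p$ of $y$ has $\C_G(y_p)$ containing the whole Sylow $p$-subgroup, and the identity $\C_G(y)=\bigcap_p\C_G(y_p)$ together with $|\C_G(y)|=|G|/|G||$ pins down the indices $\Ind(G,y_p)$ sharply.

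\emph{Coprime action and regular orbits.} Set $F=F(G)=\prod_p F_p$ with $F_p=\rmO_p(G)$; then $F$ is abelian and $\C_G(F)=F$, so $G/F$ acts faithfully on $F$. For each prime $p$, a Hall $p'$-subgroup $H_p\le G$ acts by coprime automorphisms on the abelian $p$-group $F_p$, so Lemma \ref{Go} gives $F_p=\C_{F_p}(H_p)\times[F_p,H_p]$. Applying Lemma \ref{CL2} to each abelian Sylow $q$-subgroup of $H_p$ (with $q\ne p$), acting faithfully on its invariant complement inside $[F_p,H_p]$, produces regular-orbit elements of $F_p$ whose $G$-conjugacy class sizes are $p'$-numbers bounded by $|G||/|G||_p$.

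\emph{Final contradiction and main obstacle.} The remaining task is to combine these ingredients---the $p$-regular witnesses $x_p$, the regular-orbit elements produced prime-by-prime, and the single element $y$ realizing the maximum $|G||$---to force the action of $G/F$ on every $F_p$ to be trivial. Once this is established, $\C_G(F)=F$ yields $G=F$ abelian, contradicting our choice of $G$. The main obstacle is that a Hall $p'$-subgroup of an $A$-group is itself an $A$-group but need not be abelian, so Lemma \ref{CL2} does not apply to $H_p$ on $F_p$ as a whole. I would circumvent this by bootstrapping from the two-prime case (where the centralizer index factors cleanly as $\Ind(G,x)=\Ind(G,x_p)\Ind(G,x_{p'})$ and $G/F$ is metabelian, so the Camina--Camina result for metabelian $A$-groups applies as a base case) and inducting on $|\pi(G)|$, using the prime-by-prime regular orbits together with the tight counting forced by $y$ to assemble the global vanishing of the faithful action of $G/F$ on $F$.
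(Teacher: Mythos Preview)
Your proposal does not address the stated lemma at all. Lemma~\ref{Go} is the standard coprime-action decomposition $P=\C_P(A)\times[P,A]$ for a $p'$-group $A$ of automorphisms of an abelian ($p$-)group $P$; the paper gives no proof of it but simply quotes it from Gorenstein's textbook \cite[Theorem~5.2.3]{Gorenstein}. There is therefore nothing to compare your argument against, because the paper's ``proof'' is a bare citation of a classical result.

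What you have written is instead an outline of an attack on the \emph{main theorem} of the paper (that an $A$-group $G$ with $|G||_p\in\N(G)$ for all $p$ and $|G||\in\N(G)$ must be abelian). Indeed, you explicitly invoke Lemma~\ref{Go} as a tool inside your argument, so you are clearly aware that you are using it rather than proving it. If this is due to a mislabelling of the target statement, please re-check which result you were asked to establish.

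As an aside, even read as a sketch for the main theorem your approach diverges sharply from the paper's and leaves the decisive step unfinished. The paper does not use Taunt's $Z(G)\cap G'=1$, a two-prime/metabelian base case, or induction on $|\pi(G)|$; it first shows (Lemmas~\ref{bingo1}--\ref{key}) that $\N(G)=\N(\F(G)\rtimes G/\F(G))$, allowing one to assume $G=F\rtimes T$ with $Z(G)=1$, and then pins down $|G||_p=|T|_p\cdot\max_g\Ind(F,g)_p$ via Lemmas~\ref{CC}--\ref{segundo} to force a contradiction with the element realising $|G||$. In your outline the passage ``from prime-by-prime regular orbits to global vanishing of the faithful action of $G/F$ on $F$'' is exactly the hard part, and it is asserted rather than argued.
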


\begin{lem}
\label{centre}
    Let $H$ be an abelian normal subgroup of $G$ and $g\in \C_G(H)$.
    If $\C_G(g)/H = \C_{G/H}(gH)$, 
    then for any $h\in H$, $\C_G(hg) = \C_G(h)\cap \C_G(g)$.
\end{lem}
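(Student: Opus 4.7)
\medskip\noindent\textbf{Proof plan.} The containment $\C_G(h)\cap \C_G(g)\subseteq \C_G(hg)$ is immediate: any element commuting with both $h$ and $g$ commutes with their product (and note $hg=gh$ since $g\in\C_G(H)$ and $h\in H$). So the work is in the reverse containment.

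The plan is to pick $x\in \C_G(hg)$ and first show $x\in \C_G(g)$ by reducing modulo $H$. Since $h\in H$, we have $hgH=gH$ in $G/H$, and the hypothesis $x(hg)=(hg)x$ gives $(xH)(gH)=(gH)(xH)$, i.e.\ $xH\in \C_{G/H}(gH)$. By assumption $\C_{G/H}(gH)=\C_G(g)/H$, so $x\in \C_G(g)$, that is, $xg=gx$.

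With $xg=gx$ in hand, the relation $xhg=hgx$ rewrites as $xhg=hxg$, and cancelling $g$ on the right yields $xh=hx$, i.e.\ $x\in \C_G(h)$. Hence $x\in \C_G(h)\cap \C_G(g)$, completing the proof.

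There is no real obstacle here; the lemma is essentially bookkeeping around the hypothesis that centralizers of $g$ are preserved under passage to $G/H$. The only point one must not skip is recording that $h$ and $g$ commute (which is where $g\in \C_G(H)$ enters), so that $hg=gh$ and the cancellation in the final step is legitimate.
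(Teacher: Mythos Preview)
Your proof is correct and follows essentially the same route as the paper's: show the easy containment, then for $x\in\C_G(hg)$ pass to $G/H$ (using $hgH=gH$) to get $xH\in\C_{G/H}(gH)=\C_G(g)/H$, conclude $x\in\C_G(g)$, and cancel to obtain $x\in\C_G(h)$. The only cosmetic difference is that the paper writes the key step in conjugation notation, deducing $g^x\in gH$ from $(hg)^x=hg$, whereas you reduce modulo $H$ directly; the content is identical.
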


\begin{proof}
    It is obvious that $ \C_G(h)\cap \C_G(g) \subseteq \C_G(hg)$.
    We will show that $\C_G(hg)\subseteq  \C_G(h)\cap \C_G(g)$.
    Let $x\in \C_G(hg)$.
    We have $(hg)^x = h^x g^x = hg$.
    It follows that $g^x = (h^{-1})^x h g \in gH$.
    Hence $xH \in \C_{G/H}(gH)$.
    Since $\C_G(g)/H = \C_{G/H}(gH)$, we have $x\in \C_G(g)$.
    Therefore $h^x= h$, i.e. $x\in \C_G(h)$,
    which implies that $x\in \C_G(h)\cap \C_G(g)$.
    Thus, $\C_G(hg)\subseteq  \C_G(h)\cap \C_G(g)$.
\end{proof}

\begin{lem}
\label{size}
    Suppose that $G$ is a finite group 
    and $H$ is an abelian normal $p$-subgroup of $G$.
    Let $g$ be a $p'$-element of $G$ and $h\in H$.
    If $|hg|=|g|$, then $hg = g^y$ where $y\in H$.
    Moreover, $|\C_G(hg)|=|\C_G(g)|$.
\end{lem}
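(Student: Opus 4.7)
The plan is to analyze the element $hg$ inside the subgroup $H\langle g\rangle$ via the coprime action of $\langle g\rangle$ on $H$. Let $\sigma\in\Aut(H)$ denote conjugation by $g$, so $\sigma(a)=gag^{-1}$. Because $H$ is a $p$-group and $g$ is a $p'$-element, $\sigma$ has order coprime to $|H|$, placing us in the hypotheses of Lemma~\ref{Go}.

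First I would establish, by a direct induction using normality and abelianness of $H$, the identity
\begin{equation*}
(hg)^k \;=\; h\,\sigma(h)\,\sigma^{2}(h)\cdots\sigma^{k-1}(h)\cdot g^k \qquad (k\ge 1).
\end{equation*}
Setting $k=|g|$, the hypothesis $|hg|=|g|$ collapses to the single ``norm'' equation $N(h):=h\,\sigma(h)\cdots\sigma^{|g|-1}(h)=1$.

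Next I would apply Lemma~\ref{Go} to obtain the decomposition $H=\C_H(g)\times [H,g]$ and write $h=h_1 h_2$ accordingly. A short computation shows $N$ vanishes on $[H,g]=\mathrm{Im}(\sigma-1)$, since $N\circ(\sigma-1)=\sigma^{|g|}-1=0$. Thus $1=N(h)=h_1^{|g|}$. Since $h_1$ lies in the $p$-group $H$ while $|g|$ is coprime to $p$, this forces $h_1=1$, so $h\in [H,g]=\mathrm{Im}(\sigma-1)$. Pick $y\in H$ with $h=\sigma(y)y^{-1}$. Commutativity of $H$ then yields
\begin{equation*}
hg \;=\; \sigma(y)y^{-1}g \;=\; y^{-1}\sigma(y)g \;=\; y^{-1}gy \;=\; g^{y},
\end{equation*}
giving the first claim. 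The centralizer equality is immediate, as $\C_G(hg)=\C_G(g^y)=y^{-1}\C_G(g)y$.

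The only genuinely delicate step is upgrading the norm condition $N(h)=1$ to the membership $h\in\mathrm{Im}(\sigma-1)$, and this is precisely where the coprimeness between $|H|$ and $|g|$ is used essentially — once to secure the direct decomposition via Lemma~\ref{Go}, and once more to conclude $h_1=1$ from $h_1^{|g|}=1$. Everything else is the commutator formula for $(hg)^k$ or a one-line conjugation computation, so I expect no further obstacles.
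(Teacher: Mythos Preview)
Your argument is correct and follows essentially the same route as the paper: expand $(hg)^{|g|}$ to obtain the norm condition, use the coprime decomposition $H=\C_H(g)\times[H,\langle g\rangle]$ from Lemma~\ref{Go} to force the fixed component of $h$ to vanish, and then write $h$ as a commutator to exhibit $hg$ as a conjugate of $g$. The only cosmetic difference is that you phrase things via the endomorphism $\sigma-1$ and the telescoping identity $N\circ(\sigma-1)=0$, whereas the paper carries out the same steps by direct commutator identities (showing $[H,\langle g\rangle]=[H,g]=[H,g^{-1}]$ explicitly); your identification $[H,g]=\mathrm{Im}(\sigma-1)$ is the one place where a one-line justification (e.g.\ $\ker(\sigma-1)=\C_H(g)$ and a cardinality count) would be worth adding.
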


\begin{proof}
    By Lemma \ref{Go}, $H= \C_H(g)\times [H, \gen{g}]$.
    Let $h=uv$ where $u\in \C_H(g)$, $v\in [H,\gen{g}]$ 
    and $n=|g|=|hg|$.
    Here $n$ is a $p'$-number.
    We have $(hg)^n = hh^{g^{-1}}\ldots h^{g^{1-n}} g^n
    =hh^{g^{-1}}\ldots h^{g^{1-n}} 
    =u^n vv^{g^{-1}}\ldots v^{g^{1-n}}= 1$.
    Therefore $u=1$ and so $h\in [H,\gen{g}]$.
    Since $H$ is abelian and normal in $G$, 
    we have $[a,g][b,g] = [ab,g]$ and
    $[a,g^t]=[a,g][a^g, g]\ldots [a^{g^{t-1}}, g]=[aa^g\ldots a^{g^{t-1}}, g]$,
    where $a,b\in H$ and $t\in \bbN$.
    Hence $[H,\gen{g}]=\{[x, g]|x\in H\}=[H, g]$.
    Similarly, $[H,\gen{g}]=[H,g^{-1}]$
    and we have $h=[y,g^{-1}] = y^{-1} g y g^{-1}$, where $y\in H$.
    Hence $hg = g^y$ and  $\C_G(hg) = \C_G(g^{y}) = (\C_G(g))^{y}$.
    Therefore $|\C_G(hg)|=|\C_G(g)|$.
\end{proof}

\begin{lem}
\label{L4}
    Let $P\in \Syl_p(G)$ be abelian, 
    $H$ be a normal $p$-subgroup of $G$. 
    Let $g\in P\setminus H$ and $T\leq G$ such that $T/H= \C_{G/H}(gH)$.
    If $\C_G(g) < T$, 
    then $g = xy$, where $1\neq x\in \C_G(T)$ and $1\neq y\in H$.
    Moreover, $\C_G(x)=T$, $\C_G(x)/H = \C_{G/N}(xH)$ 
    and $\C_G(y)\cap T=\C_G(g)$.
\end{lem}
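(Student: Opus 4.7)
The plan is to split $g$ using Lemma \ref{Go} applied to the abelian $p$-group $M:=\gen{g}H$, viewed as a $T$-module under conjugation. Once the decomposition is in hand, the rest of the statement follows from bookkeeping with centralizers.

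First I would check that $M$ is an abelian, $T$-invariant subgroup of $P$. Since $H$ is a normal $p$-subgroup of $G$, we have $H \leq \opg_p(G) \leq P$; combined with $g \in P$ and the hypothesis that $P$ is abelian, this gives $M \leq P$ abelian. For $T$-invariance: $T$ normalizes $H$ by normality in $G$, and for $t \in T$ the condition $T/H = \C_{G/H}(gH)$ yields $g^t \in gH \subseteq M$, so $M^t = M$.

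The core of the argument is that the induced action $T \to \Aut(M)$ factors through a $p'$-group. Since $M \leq P$ and $P$ is abelian, $P \leq \C_T(M)$, while $|T:P|$ is a $p'$-number because $P$ is Sylow in $T$. Hence $T/\C_T(M)$ is a $p'$-group acting on the abelian $p$-group $M$, and Lemma \ref{Go} gives $M = \C_M(T) \times [M,T]$. Writing $g = xy$ along this decomposition, $x \in \C_M(T) \subseteq \C_G(T)$ is automatic. A short commutator computation (using $g^t \in gH$ for $t\in T$, $h^t \in H$ for $h\in H$, and the fact that $M$ is abelian) shows $[M,T] \leq H$, so $y \in H$. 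Nontriviality is then immediate: $y = 1$ would force $g = x \in \C_G(T)$, contradicting $\C_G(g) < T$, while $x = 1$ would force $g = y \in H$, contradicting $g \notin H$.

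The "moreover" part is bookkeeping. From $y \in H$ we get $xH = gH$, so $\C_{G/H}(xH) = \C_{G/H}(gH) = T/H$; combined with $T \leq \C_G(x)$ and $\C_G(x)/H \leq \C_{G/H}(xH)$ this forces $\C_G(x) = T$ and $\C_G(x)/H = \C_{G/H}(xH)$. For the last equation, any $z \in \C_G(g)$ lies in $T$ (since $\C_G(g) \leq T$), hence centralizes $x$ (as $T \leq \C_G(x)$), and therefore centralizes $y = x^{-1}g$; conversely any $z \in \C_G(y) \cap T$ centralizes both $x$ and $y$, hence $g$. I expect the only real subtlety is noticing that the correct object to decompose is $M = \gen{g}H$ and that the abelian-Sylow hypothesis is precisely what turns the $T$-action on $M$ into a $p'$-action, opening the door to Lemma \ref{Go}; everything else reduces to direct verifications.
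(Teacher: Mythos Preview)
Your proposal is correct and follows essentially the same route as the paper: both arguments set $M=\langle g\rangle H$, observe that $T$ normalizes $M$ and acts on it through a $p'$-quotient (because the abelian Sylow $P$ lies in the kernel), apply Lemma~\ref{Go} to split $M=\C_M(T)\times[M,T]$, check $[M,T]\le H$, and then verify the centralizer statements. The only cosmetic differences are that the paper builds the auxiliary semidirect product $K\rtimes T/L$ explicitly before invoking Lemma~\ref{Go}, and your bookkeeping for $\C_G(x)=T$ via $xH=gH$ is slightly slicker than the paper's direct computation; one small point you leave implicit is that $P\le T$ (needed for ``$P$ is Sylow in $T$''), which follows from $P\le\C_G(g)\le T$ since $P$ is abelian and $H\le P$.
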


\begin{proof}
    Let $t\in T$. We have $g^t = gh$ with $h\in H$.
    Therefore, $\gen{g}H$ is normal in $T$.
    Let $K= \gen{g}H$ and $L= \C_T(K)$.
    Since Sylow $p$-subgroups of $G$ are abelian,
    $T/L$ is a $p'$-group.
    Consider $M=K\rtimes T/L$, where $k^{tL}=k^t$.
    Since $K$ is normal in $T$ and $L = \C_T(K)$, $M$ is well-defined.
    In $M$, by Lemma \ref{Go}, we have $K = \C_K(T/L) \times [K, T/L]$.
    Therefore in $G$, $K=\C_K(T)\times [K,T]$.
    We have $[K,T]H/H = [K/H, T/H] = [\gen{g}H/H, \C_{G/H}(gH)]= 1$.
    Hence $[K,T]\leq H$.
    Therefore $\C_K(T)>1$.
    Since $g\notin \C_K(T)$ and $g\notin H$, 
    we have $g=xy$, where $x\in \C_K(T)$ and $y\in [K,T]\leq H$.
    
    Since $gy^{-1}\in \C_K(T)$,
    we have $T\leq \C_G(gy^{-1})$ and so $\C_G(g)\leq \C_G(gy^{-1})$.
    Thus, $\C_G(g)\leq \C_G(y^{-1})=\C_G(y)$.
    For any element $s\in T\setminus \C_G(g)$,
    if $s\in \C_G(y)$, then $gx^{-1}s= sgx^{-1}$.
    However, $x\in \C_K(T)$.
    Therefore $gs=sg$, a contradiction.
    So $\C_G(y) \cap T = \C_G(g)$.

    Since $x\in \C_K(T)$, $T\leq \C_G(x)$.
    Let $c\in \C_G(x)$.
    Then $(gy^{-1})^c=gy^{-1}$ and so $g^{-1} g^c =y^{-1} y^c\in H$.
    Hence $c\in T$.
    Therefore $\C_G(x)=T$ and $\C_G(x)/H = \C_{G/N}(gH) = \C_{G/N}(xH)$.
\end{proof}

Let $H$ be a normal abelian subgroup of $G$. 
We can define a natural semidirect product $H\rtimes G/H$  
as $\{(h, gH)|h\in H, gH\in G/H\}$ with the action $h^{gH}=h^g$.
Since $H$ is abelian, this semidirect product is well-defined.

\begin{lem}
\label{bingo1}
    Assume that a Sylow $p$-subgroup of $G$ is abelian
    and $H$ is a normal $p$-subgroup of $G$. 
    Then $\N(G)\subseteq \N(H\rtimes G/H)$.
\end{lem}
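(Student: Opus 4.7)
\emph{Plan.} Fix $x \in G$ with commuting $p$- and $p'$-parts $x = x_p x_{p'}$. Every Sylow $p$-subgroup of $G$ is abelian and contains $H$, so $x_p \in \C_G(H)$ and conjugation by $x_p$ acts trivially on $H$. Write $\bar g = gH$ for the image in $G/H$ and let $\rho : G/H \to \Aut(H)$, $\rho(\bar g)(h) = ghg^{-1}$, be the induced conjugation action, so $\rho(\bar x_p) = 1$ and hence $\rho(\bar x) = \rho(\bar x_{p'})$. Applying Lemma \ref{Go} to the $p'$-automorphism $\rho(\bar x_{p'})$ of the abelian $p$-group $H$ gives the decomposition $H = L \oplus M$ with $L = \C_H(x_{p'}) = \ker(1 - \rho(\bar x_{p'}))$ and $M = [H, x_{p'}]$. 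Moreover every $\bar s \in \C_{G/H}(\bar x_{p'})$ commutes with $\rho(\bar x_{p'})$ and so preserves both summands $L$ and $M$.

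The plan is to exhibit $\widetilde x = (h_1, \bar x) \in \widetilde G$ with $h_1 \in L$ and $|\C_{\widetilde G}(\widetilde x)| = |\C_G(x)|$. Set $T/H = \C_{G/H}(\bar x_p)$, so that $\C_G(x_p) \leq T$ by Lemma \ref{basic}(iii). Choose $h_1$ as follows. If $x_p \in H$, take $h_1 = x_p$, which lies in $L$ since $x_p$ commutes with $x_{p'}$. If $x_p \notin H$ and $\C_G(x_p) = T$, take $h_1 = 1$. If $x_p \notin H$ and $\C_G(x_p) < T$, apply Lemma \ref{L4} to write $x_p = u h_1$ with $u \in \C_G(T)$ and $h_1 \in H$; since $x_{p'} \in \C_G(x_p) \leq T$, the element $u$ commutes with $x_{p'}$, so $h_1 = u^{-1} x_p$ also commutes with $x_{p'}$, i.e.\ $h_1 \in L$.

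A direct calculation in $\widetilde G$ shows that $(h, \bar s)$ centralizes $(h_1, \bar x)$ precisely when $\bar s \in \C_{G/H}(\bar x)$ and $h \in H$ satisfies an equation of the form $(1 - \rho(\bar x))(h) = (1 - \rho(\bar s))(h_1)$. Since $\ker(1 - \rho(\bar x)) = L$, there are $|L|$ valid $h$ when $(1 - \rho(\bar s))(h_1) \in M$ and none otherwise, so
\[
|\C_{\widetilde G}(\widetilde x)| = |L| \cdot \bigl|\{\bar s \in \C_{G/H}(\bar x) : (1 - \rho(\bar s))(h_1) \in M\}\bigr|.
\]
Because $h_1 \in L$ and $\bar s$ preserves $L$, the element $(1 - \rho(\bar s))(h_1)$ lies in $L$, so its membership in $M$ forces it to lie in $L \cap M = 0$, i.e.\ to vanish. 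Hence the condition simplifies to $\rho(\bar s)(h_1) = h_1$, and
\[
|\C_{\widetilde G}(\widetilde x)| = |L| \cdot |\C_{G/H}(\bar x) \cap \C_G(h_1)/H|.
\]

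The same number is obtained on the $G$-side. Combining $\C_G(x) = \C_G(x_p) \cap \C_G(x_{p'})$ (Lemma \ref{basic}(ii)), $\C_G(x_{p'}) H/H = \C_{G/H}(\bar x_{p'})$ (Lemma \ref{basic}(iv)), and the case-dependent description of $\C_G(x_p)$ --- which contains $H$ when $x_p \in H$, equals $T$ when $\C_G(x_p) = T$, and equals $T \cap \C_G(h_1)$ by Lemma \ref{L4} when $\C_G(x_p) < T$ --- a fiber count over the projection $G \to G/H$ gives $|\C_G(x)| = |L| \cdot |\C_{G/H}(\bar x) \cap \C_G(h_1)/H|$ in each case. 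The principal subtlety is securing $h_1 \in L$ when $\C_G(x_p) < T$; once that placement is in hand, the two centralizer formulas agree, and the inclusion $\N(G) \subseteq \N(\widetilde G)$ follows.
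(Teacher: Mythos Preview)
Your proposal is correct and follows essentially the same approach as the paper: both arguments split on whether $x_p\in H$ and whether $\C_G(x_p)=T$, invoke Lemma~\ref{L4} in the proper-case to produce the element $h_1\in H$, and then match $\Ind(G,x)$ with $\Ind(\widetilde G,(h_1,\bar x))$. Your packaging via the coprime decomposition $H=L\oplus M$ and the single centralizer formula $|\C_{\widetilde G}(\widetilde x)|=|L|\cdot|\C_{G/H}(\bar x)\cap \C_G(h_1)/H|$ is a tidier presentation of the same case-by-case computations the paper carries out explicitly.
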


\begin{proof}
    Let $\widetilde{G}=H\rtimes G/H$. 
    It is clear that $|G|=|\widetilde{G}|$.
    We will show that for any element $g\in G$,
    there exists an element $\widetilde{g}$ from $\widetilde{G}$ 
    such that $|\C_G(g)|=|\C_{\widetilde{G}}(\widetilde{g})|$.

    If $g$ is a $p'$-element in $G$,
    by Lemma \ref{basic}(iv), we have $\C_{G/H}(gH) = \C_G(g)H/H$.
    Hence $|\C_{G/H}(gH)| = |\C_G(g)| / |\C_G(g)\cap H|
    = |\C_G(g)|/|\C_H(g)|$.
    Consider the element $(1, gH)$ in $\widetilde{G}$.
    Let $(a,bH)\in C_{\widetilde{G}}((1,gH))$.
    By calculation, we have $a\in \C_H(g)$ and $bH\in \C_{G/H}(gH)$.
    Therefore, 
    $|C_{\widetilde{G}}((1,gH))|= |\C_H(g)|\times |\C_{G/H}(gH)| = |C_G(g)|$.

    Assume that $g$ is a $p$-element in $G$.
    If $g\in H$, It is easy to verify that $|\C_G(g)|= |\C_{\widetilde{G}}((g,H))|$.
    So it suffices to consider when $g\notin H$.
    Let $T\leq G$ be such that $T/H = \C_{G/H}(gH)$.
    If $T=\C_G(g)$, then we can get $|C_{\widetilde{G}}((1,gH))|= |C_G(g)|$ as above.
    Consider $T>\C_G(g)$. 
    By Lemma \ref{L4}, 
    we have $g = xy$, where $1\neq x\in \C_G(T)$, 
    $1\neq y\in H$ and $\C_G(y)\cap T=\C_G(g)$.
    Consider $(y, gH)\in \widetilde{G}$.
    It is easy to verify that $|\C_G(g)|=|\C_{\widetilde{G}}((y,gH))|$.

    If $g$ is neither a $p$-element nor a $p'$-element, 
    then $g = uv$ with $uv=vu$, 
    where $|u| = |g|_p$ and $|v| = |g|_{p'}$. 
    Moreover, $C_G(g) = C_G(u) \cap C_G(v)$.  
    If $u\in H$, then $(u, vH)=(u,H)(1, vH)=(1,vH)(u,H)$.
    We have $\C_{\wtG}((u, vH))=\C_{\wtG}((u,H))\cap \C_{\wtG}((1,vH))$.
    Let $(a,bH)\in \C_{\widetilde{G}}((u,vH))$.
    By calculation, we have $a\in \C_H(v) ,bH\in \C_G(u)/H \cap \C_{G/H}(vH)$.
    Then 
    $$\begin{aligned}
        |\C_{\wtG}((u,vH))| 
        &=|\C_H(v)||\C_G(u)/H \cap \C_{G/H}(vH)| =|\C_H(v)||\C_G(u)/H \cap \C_{G}(v)H/H| \\
        &=|(\C_H(v)||(\C_G(u)\cap \C_{G}(v)H)/H| = |(\C_H(v)||\C_G(u)\cap \C_{G}(v))H/H| \\
        &=|(\C_H(v)||\C_G(g)H/H|
        =|\C_G(g)|.
    \end{aligned}$$
    If $u\notin H$ and $\C_G(u)/H=\C_{G/H}(uH)$, 
    then 
    $$\begin{aligned}
        \C_{G/H}(gH) &=\C_{G/H}(uH)\cap \C_{G/H}(vH)=\C_G(u)/H\cap \C_G(v)H/H \\
                     &=(\C_G(u)\cap \C_G(v)H)/H =(\C_G(u)\cap \C_G(v))H/H=\C_G(g)H/H.
    \end{aligned}$$
    We can get $|\C_{\wtG}((1,gH))=|\C_G(g)|$ as above.
    
    Finally, we consider the case where $u\notin H$ and $\C_G(u)/H< \C_{G/H}(uH)$.
    Let $T_u \leq G$ be such that $T_u/H = \C_{G/H}(uH)$.
    By Lemma \ref{L4}, we have $u=x_u y_u$ where $1\neq x_u\in \C_G(T_u)$, 
    $1\neq y_u\in H$ and $\C_G(y_u)\cap T_u=\C_G(u)$. 
    Since $v\in \C_G(u)$, we have $v\in \C_G(y_u)$.
    Hence $(y_u, uvH)=(y_u, uH)(1, vH)=(1, vH)(y_u, uH)$.
    Similarly, we can verify that $|\C_{\wtG}((y_u, gH))|=|\C_G(g)|$.
    The lemma is proved.
\end{proof}

\begin{lem}
\label{bingo2}
    Assume that a Sylow $p$-subgroup of $G$ is abelian
    and $H$ is a normal $p$-subgroup of $G$. 
    Then $ \N(H\rtimes G/H)\subseteq \N(G)$.
\end{lem}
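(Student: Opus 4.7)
The plan is to mirror the construction in Lemma \ref{bingo1} in reverse: for each $(h, \overline{g}) \in \wtG$ I would construct an element $z \in G$ with $|\C_G(z)| = |\C_{\wtG}((h,\overline{g}))|$, via a single unified construction covering all cases.

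Given $(h, \overline{g}) \in \wtG$, let $\overline{g} = \overline{u}\,\overline{v}$ be the decomposition of $\overline{g}$ in $G/H$ into commuting $p$-part $\overline{u}$ and $p'$-part $\overline{v}$, and set $T_u/H = \C_{G/H}(\overline{u})$, $T_v/H = \C_{G/H}(\overline{v})$, so $\C_{G/H}(\overline{g}) = (T_u \cap T_v)/H$. The first step is to choose commuting lifts $u_0, v_0 \in G$ with $u_0$ a $p$-element, $v_0$ a $p'$-element, and $\C_G(u_0) = T_u$. I would take $v_0$ to be any $p'$-lift of $\overline{v}$ (the $p'$-part of an arbitrary preimage); since $\overline{u}$ commutes with $\overline{v}$, Lemma \ref{basic}(iv) yields a $p$-element lift $u' \in \C_G(v_0)$ of $\overline{u}$; if $\C_G(u') < T_u$, Lemma \ref{L4} replaces it by $u_0 = x \in \C_G(T_u)$, which still lifts $\overline{u}$, now satisfies $\C_G(u_0) = T_u$, and automatically lies in $\C_G(v_0)$ since $v_0 \in T_u$. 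As Sylow $p$-subgroups of $G$ are abelian and contain $H$, the $p$-element $u_0$ centralizes $H$. Finally, applying Lemma \ref{Go} to the action of $\overline{v_0}$ on the abelian $p$-group $H$, I would decompose $h = h' h''$ uniquely with $h' \in \C_H(v_0)$ and $h'' \in [H, v_0]$, and set $z := h' u_0 v_0 \in G$.

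On the $G$-side: $h' u_0$ is a $p$-element commuting with the $p'$-element $v_0$ (since $h' \in \C_H(v_0)$ and $u_0 \in \C_G(v_0)$), so by Lemma \ref{basic}(ii), $\C_G(z) = \C_G(h' u_0) \cap \C_G(v_0)$. Since $u_0 \in \C_G(H)$ with $\C_G(u_0)/H = T_u/H = \C_{G/H}(u_0 H)$, Lemma \ref{centre} gives $\C_G(h' u_0) = \C_G(h') \cap \C_G(u_0) = \C_G(h') \cap T_u$. The intersection $\C_G(h') \cap T_u \cap \C_G(v_0)$ meets $H$ in $\C_H(v_0)$ and, by Lemma \ref{basic}(iv) applied to $v_0$, projects onto $(T_u \cap T_v)/H \cap \C_{G/H}(h')$; hence $|\C_G(z)| = |\C_H(v_0)| \cdot |(T_u \cap T_v)/H \cap \C_{G/H}(h')|$. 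On the $\wtG$-side, I would enumerate $(a, \overline{b}) \in \C_{\wtG}((h, \overline{g}))$ directly: the centralizer condition reduces to $\overline{b} \in (T_u \cap T_v)/H$ together with an equation in $H$ of the form $[a, v_0] = [h, b]$ (using that $u_0$ acts trivially on $H$). Decomposing $h = h' h''$ and using that every element of $\C_{G/H}(\overline{v})$ preserves the direct sum $H = \C_H(v_0) \times [H, v_0]$, one checks that the equation is solvable in $a$ iff $\overline{b}$ fixes $h'$, and that each solvable $\overline{b}$ admits exactly $|\C_H(v_0)|$ solutions. This yields the same count.

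The main obstacle I anticipate is the semidirect-product bookkeeping on the $\wtG$-side, specifically the solvability criterion for the $a$-equation. The crucial input is that $\C_H(v_0) \cap [H, v_0] = 1$ (Lemma \ref{Go}) and that $\C_{G/H}(\overline{v})$ preserves both summands, which forces the $\C_H(v_0)$-component of $[h, b]$ to vanish whenever the equation is solvable. A secondary concern is realizing the construction itself: arranging simultaneously that $\C_G(u_0) = T_u$ and $u_0 \in \C_G(v_0)$. This uses the extra fact, built into the proof of Lemma \ref{L4}, that the replacement element $x$ lies in $\C_G(T_u)$, so commutes automatically with any $v_0 \in T_u$.
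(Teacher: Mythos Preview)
Your argument is correct. The construction of $z = h' u_0 v_0$ is sound: the replacement step via Lemma~\ref{L4} does yield $u_0 \in \C_G(T_u)$ with $u_0 H = \overline{u}$, hence $u_0$ automatically commutes with $v_0 \in T_u$; the trivial case $\overline{u}=1$ is covered by simply taking $u_0=1$. The two cardinality computations match because, by Dedekind's law, $\bigl(\C_G(h')\cap T_u\cap \C_G(v_0)\bigr)H/H = \C_G(h')/H \cap T_u/H \cap T_v/H$, while on the $\wtG$-side the commutativity of the actions of $\overline{b}\in T_v/H$ and $\overline{v_0}$ on $H$ (which follows from $[b,v_0]\in H\le \C_G(H)$) indeed forces the $\C_H(v_0)$-component of $h^{b^{-1}}h^{-1}$ to be $(h')^{b^{-1}}(h')^{-1}$, giving exactly the solvability criterion you state.

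The paper's proof reaches the same conclusion via a three-way case split on whether $(h,gH)$ is a $p$-element, a $p'$-element, or mixed. For the $p'$-case it invokes Lemma~\ref{size} to show that $(h,gH)$ is conjugate in $\wtG$ to $(1,gH)$, thereby discarding $h$ entirely; in the mixed case it first decomposes $(h,gH)$ inside $\wtG$ into its $p$- and $p'$-parts and then normalises each part using Lemmas~\ref{size} and~\ref{L4}. Your route folds all of this into one computation: rather than conjugating away the ``bad'' part $h'' \in [H,v_0]$ using Lemma~\ref{size}, you let it survive and absorb it into the solvability condition for the $a$-equation, which is really the same content (the map $a\mapsto a^{v_0^{-1}}a^{-1}$ has image $[H,v_0]$ and kernel $\C_H(v_0)$). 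The net effect is the same element $z$ (up to conjugacy) and the same count; your packaging trades a case analysis for a slightly heavier semidirect-product calculation.
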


\begin{proof}
    Let $\wtG = H\rtimes G/H$ and $(h,gH)\in \wtG$. 
    We will show that there exists an element $g'\in G$ such that
    $|\C_{\wtG}((h,gH))|=|\C_G(g')|$.

    If $(h,gH)$ is a $p$-element, then $g$ must be a $p$-element.
    By Lemma \ref{L4}, without loss of generality, 
    we let $g$ satisfy $\C_{G/H}(gH)=\C_G(g)/H$.
    By Lemma \ref{centre}, we have $\C_G(gh) = \C_G(g)\cap \C_G(h)$.
    Let $(a,bH)\in \C_{\wtG}((h,gH))$.
    We can get that $a\in H$, $b\in \C_G(g)\cap \C_G(h)=\C_G(gh)$.
    Hence, $|\C_{\wtG}((h,gH))|=|H||\C_G(gh)/H|=|\C_G(gh)|$.

    If $(h,gH)$ is a $p'$-element, without loss of generality, 
    we can let $g$ be a $p'$-element.
    By Lemma \ref{size}, $|\C_{\wtG}((h,gH))|=|\C_{\wtG}((1,gH))|$.
    It easy to verify that $|\C_{\wtG}((1,gH))|=|\C_G(g)|$.

    If $(h, gH)$ is neither a $p$-element nor a $p'$-element,
    it can be written as the product of a $p$-element and a $p'$-element that commute with each other.
    By Lemmas \ref{size} and \ref{L4}, without loss of generality, 
    let $(h,gH)=(h, nH)(1, kH)=(1, kH)(h, nH)$, 
    where $n$ is a $p$-element such that $\C_{G/H}(nH) = \C_G(n)/H$
    and $k$ is a $p'$-element such that $k\in \C_G(h)\cap \C_G(n)=\C_G(hn)$.
    Let $(a, bH)\in \C_{\wtG}((h,gH))$.
    we have $(a,bH)\in \C_{\wtG}((h, nH))\cap \C_{\wtG}((1,kH))$.
    By calculation, we can get that $a\in \C_H(k)$ 
    and $bH\in \C_G(hn)/H \cap \C_G(k)H/H=\C_G(hnk)H/H$.
    Hence 
    $|\C_{\wtG}((h,gH))|
    =|\C_H(k)||\C_G(hnk)H/H|
    =|\C_G(hnk)|$.
\end{proof}

The following lemma is the key to the entire proof
and also quite interesting on its own.

\begin{lem}
\label{bingo}
    Assume that a Sylow $p$-subgroup of $G$ is abelian
    and $H$ is a normal $p$-subgroup of $G$. 
    Then $\N(G)= \N(H\rtimes G/H)$.
\end{lem}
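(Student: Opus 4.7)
The plan is essentially to observe that the statement is the immediate combination of the two preceding lemmas. Lemma \ref{bingo1} establishes the inclusion $\N(G) \subseteq \N(H \rtimes G/H)$, and Lemma \ref{bingo2} establishes the reverse inclusion $\N(H \rtimes G/H) \subseteq \N(G)$. Putting these together yields the desired equality, and this is all the proof needs to record.

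The real work has already been done in Lemmas \ref{bingo1} and \ref{bingo2}, so no new obstacle arises here. In both directions one argues by splitting an element according to its primary decomposition using Lemma \ref{basic}(ii): for a pure $p'$-element one transfers the centralizer via Lemma \ref{basic}(iv); for a pure $p$-element one uses Lemma \ref{L4} to factor $g = xy$ with $x \in \C_G(T)$ and $y \in H$ whenever $\C_G(g)$ is strictly smaller than the preimage $T$ of $\C_{G/H}(gH)$, which produces a matching element $(y, gH)$ (or $(1,gH)$) in $\wtG$ with the same centralizer size; and for a genuinely mixed element one applies Lemma \ref{centre} together with Lemma \ref{size} to keep the $p$-part and $p'$-part commuting on both sides. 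I would simply cite this and conclude.

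The only reason one might hesitate is that this combined equality could seem redundant; however, its utility is that it is the structural tool underpinning the rest of the paper. It allows the analysis of an arbitrary $A$-group to be reduced, without changing the index set, to the semidirect product $H \rtimes G/H$ for any normal $p$-subgroup $H$, where centralizers split naturally as products over $H$ and over $G/H$. Thus the forward-looking plan for the proof is a single line appealing to Lemmas \ref{bingo1} and \ref{bingo2}, and the substantive effort is already invested in those two results.
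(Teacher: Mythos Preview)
Your proposal is correct and matches the paper's own proof exactly: the paper also records Lemma~\ref{bingo} as an immediate consequence of Lemmas~\ref{bingo1} and~\ref{bingo2}, in a single sentence. Your additional commentary on how those two lemmas work and why the result is useful is accurate but unnecessary for the proof itself.
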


\begin{proof}
    It follows immediately from Lemmas \ref{bingo1} and \ref{bingo2}.
\end{proof}

\begin{lem}
\label{key}
    Let $G$ be an $A$-group. Then $\N(G)=\N(\F(G)\rtimes G/\F(G))$.
\end{lem}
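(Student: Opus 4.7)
Since $G$ is an $A$-group, each $O_p(G)$ is abelian (it sits inside an abelian Sylow $p$-subgroup), so $F := \F(G) = \prod_{p \in \pi(F)} O_p(G)$ is an abelian normal subgroup of $G$. Write $\pi(F) = \{p_1, \ldots, p_k\}$ and $H_i = O_{p_i}(G)$, so that $F = H_1 \times H_2 \times \cdots \times H_k$ as an internal direct product; in particular, $[H_i, H_j] = 1$ in $G$ for all $i \neq j$.

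The plan is to apply Lemma \ref{bingo} exactly $k$ times, peeling off the primes dividing $|F|$ one at a time. Set $G^{(0)} := G$ and, for $i = 1, \ldots, k$, define $G^{(i)} := H_i^{\star} \rtimes G^{(i-1)}/H_i^{\star}$, where $H_i^{\star}$ is the natural copy of $H_i$ sitting inside $G^{(i-1)}$ — obtained by successively lifting the normal $p_i$-subgroup $H_i H_1 \cdots H_{i-1}/H_1 \cdots H_{i-1}$ of $G/(H_1 \cdots H_{i-1})$ through the preceding semidirect-product layers. Each invocation of Lemma \ref{bingo} will yield $\N(G^{(i-1)}) = \N(G^{(i)})$, giving the chain $\N(G) = \N(G^{(1)}) = \cdots = \N(G^{(k)})$, after which it remains to identify $G^{(k)}$ with the canonical semidirect product $F \rtimes G/F$.

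At each step, Lemma \ref{bingo} demands two verifications: (a) $H_i^{\star}$ is a normal $p_i$-subgroup of $G^{(i-1)}$, and (b) a Sylow $p_i$-subgroup of $G^{(i-1)}$ is abelian. Hypothesis (b) is immediate: any Sylow $p_i$-subgroup of $G^{(i-1)}$ meets the normal $p_i'$-subgroup $H_1^{\star} \cdots H_{i-1}^{\star}$ trivially, hence maps isomorphically onto a Sylow $p_i$-subgroup of $G^{(i-1)}/(H_1^{\star} \cdots H_{i-1}^{\star}) \cong G/(H_1 \cdots H_{i-1})$, which is abelian since $G$ is an $A$-group. For (a) — the main technical point — I would use the pairwise commutation $[H_i, H_j] = 1$ in $G$: a direct computation in each nested semidirect product shows that every previously peeled copy $H_j^{\star}$ ($j < i$) acts trivially on $H_i^{\star}$ inside $G^{(i-1)}$, so $H_i^{\star}$ is preserved by all conjugations and is therefore normal.

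To finish, I would identify $G^{(k)} \cong F \rtimes G/F$: the copies $H_1^{\star}, \ldots, H_k^{\star}$ sit inside $G^{(k)}$ as pairwise commuting normal subgroups with trivial pairwise intersections, so their product is an abelian normal subgroup isomorphic to $F$; the quotient is $G/F$ and acts on $F$ by conjugation as in $G$ by construction; and the iterated semidirect-product construction manifestly splits the extension. The main obstacle will be the bookkeeping in (a) — tracking the commutations $[H_i, H_j] = 1$ through successive layers of semidirect product — but the required computations are elementary and entirely analogous in flavor to the casework already performed in the proofs of Lemmas \ref{bingo1} and \ref{bingo2}.
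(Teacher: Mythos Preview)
Your proposal is correct and follows essentially the same strategy as the paper: iterate Lemma~\ref{bingo} over the primes dividing $|\F(G)|$ and then identify the resulting iterated semidirect product with $\F(G)\rtimes G/\F(G)$. The paper streamlines your normality check (a) and your final identification step into a single explicit isomorphism, showing that for any two abelian normal subgroups $H,N$ of $G$ with $H\cap N=1$ one has $N_1\rtimes\bigl((H\rtimes G/H)/N_1\bigr)\cong HN\rtimes G/HN$ via the map $(hn,gHN)\mapsto\bigl((1,nH),(h,gH)N_1\bigr)$, after which the induction on $|\pi(\F(G))|$ is immediate.
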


\begin{proof}
    Let $H$ and $N$ be abelian normal subgroups of $G$ and $H\cap N=1$.
    Let $G_1 = H\rtimes G/H$ and $N_1 = \{(1,gH)|g\in N\}$.
    It is clear that $N_1\cong NH/H \cong N$ is also an abelian normal subgroup of $G_1$.
    Let $G_2 = N_1\rtimes G_1/N_1$ and $G_3 = HN\rtimes G/HN$.
    We will show that $G_3 \cong G_2$.

    Let $(hn,gHN)\in G_3$, where $h\in H, n\in N, g\in G$.
    Let $\phi$ be a map from $G_3$ to $G_2$ 
    and $\phi((hn, gHN)) = ((1, nH), (h,gH)N_1)$.
    Since $(h,gH)N_1=\{(h,gH)(1, kH)|k\in N\}
    =\{(h, gkH)|k\in N\}$,
    $\phi$ is well-defined.
    We have 
    $$\begin{aligned}
        \phi(h_1 n_1, g_1 HN)\phi(h_2 n_2, g_2 HN) 
        &=((1, n_1 H), (h_1,g_1 H)N_1)((1, n_2 H), (h_2,g_2 H)N_1) \\
        &=((1, n_1 H)(1,n_2H)^{(h_1, g_1 H)^{-1}}, (h_1,g_1 H)(h_2,g_2 H)N_1) \\
        &=((1, n_1 n_2^{g_1^{-1}}H), (h_1 h_2^{g_1^{-1}}, g_1g_2H)N_1) \\
        &=\phi((h_1 h_2^{g_1^{-1}} n_1 n_2^{g_1^{-1}}, g_1 g_2 HN)) \\
        &=\phi((h_1 n_1, g_1 HN)(h_2 n_2, g_2 HN))
    \end{aligned}.$$
    Hence $\phi$ is a homomorphism.
    It is easy to verify that the kernel of $\phi$ is trivial.
    Therefore $\phi$ is an automorphism and $G_3\cong G_2$.

    Let $\pi(\F(G))=\{p_1, p_2,\ldots, p_r\}$. 
    We have $\F(G)=\rmO_{p_1}(G)\times \rmO_{p_2}(G)\times\cdots\times\rmO_{p_r}(G)$.
    Since $G$ is an $A$-group, 
    $\rmO_{p_i}(G)$ is abelian for every $1\leq i\leq r$.
    By repeatedly applying the above result and Lemma \ref{bingo}, 
    we can get $\N(G)=\N(\F(G)\rtimes G/\F(G))$.
\end{proof}

\begin{lem}
    Let $G$ be an $A$-group. 
    Then $G$ is an abelian group 
    if and only if $\F(G)\rtimes G/\F(G)$ is abelian.
\end{lem}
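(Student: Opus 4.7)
The forward direction is immediate: if $G$ is abelian then $G$ is nilpotent with $\F(G)=G$, the quotient $G/\F(G)$ is trivial, and the semidirect product $\F(G)\rtimes G/\F(G)$ reduces to $G$ itself, which is abelian.

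For the converse, I would set $\wtG = \F(G)\rtimes G/\F(G)$ and assume $\wtG$ is abelian. The first step is to observe that abelianness of $\wtG$ forces the defining action of $G/\F(G)$ on $\F(G)$ to be trivial. Using the multiplication rule $(h_1, g_1\F(G))(h_2, g_2\F(G)) = (h_1 h_2^{g_1^{-1}}, g_1 g_2 \F(G))$ from the proof of Lemma~\ref{key}, the commutativity of a generic $(h,1)$ with a generic $(1, g\F(G))$ reduces to $h^{g^{-1}} = h$ for every $h\in \F(G)$ and every $g\in G$. Hence every element of $G$ centralizes $\F(G)$, i.e.\ $G\leq \C_G(\F(G))$.

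Next I would invoke two classical facts. First, by Taunt's theorem every $A$-group is solvable; second, in a solvable group the Fitting subgroup is self-centralizing, i.e.\ $\C_G(\F(G))\leq \F(G)$. (Note in passing that $\F(G)$ is itself abelian in an $A$-group, since it is nilpotent and hence the direct product of its Sylow subgroups, each of which sits inside an abelian Sylow of $G$; this makes the inclusion $\F(G)\leq \C_G(\F(G))$ automatic, and both inclusions together give $\C_G(\F(G))=\F(G)$.) Combining this with the previous step, $G\leq \C_G(\F(G))\leq \F(G)$, so $G=\F(G)$ is nilpotent. Finally, a nilpotent $A$-group is the direct product of its abelian Sylow subgroups and is therefore abelian.

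I do not expect a real obstacle here: the argument is a short assembly of standard structure-theoretic facts. The only point that requires attention is the first step, where one has to carefully unpack what ``$\wtG$ is abelian'' means with the multiplication convention in force, so as to conclude triviality of the action rather than a weaker statement.
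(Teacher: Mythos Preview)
Your argument follows the same route as the paper's proof, but it contains one genuine error: the assertion that ``by Taunt's theorem every $A$-group is solvable'' is false. Taunt's classical paper studies \emph{solvable} $A$-groups; it does not prove that all $A$-groups are solvable, and indeed they need not be. For instance $A_5\cong\mathrm{PSL}(2,4)$ has abelian Sylow subgroups (elementary abelian of order $4$, cyclic of orders $3$ and $5$) yet is simple. So you cannot invoke solvability of $G$ merely from the $A$-group hypothesis.

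The repair is immediate and is exactly what the paper does: derive solvability from the hypothesis that $\wtG=\F(G)\rtimes G/\F(G)$ is abelian. Since $G/\F(G)$ embeds in $\wtG$, it is abelian; and $\F(G)$ is abelian because $G$ is an $A$-group. Hence $G$ is abelian-by-abelian, in particular solvable, and your remaining steps ($\C_G(\F(G))\leq\F(G)$, so $G=\F(G)$, hence $G$ is a nilpotent $A$-group and therefore abelian) go through unchanged. With this correction your proof coincides with the paper's.
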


\begin{proof}
    It is enough to verify the sufficiency.
    Suppose that $\F(G)\rtimes G/\F(G)$ is abelian.
    Then $G$ is solvable.
    Hence $\C_G(\F(G))\leq \F(G)$.
    On the other hand, 
    from the construction of $\F(G) \rtimes G/\F(G)$, 
    we have $\F(G) \leq \Z(G)$.
    Therefore $G$ is abelian.
\end{proof}

\begin{lem}
\label{CA}
    Let $G$ be an $A$-group and $G=F\rtimes T$, 
    where $F$ is the Fitting subgroup of $G$.
    Then 

    (i) For $y\in T$, we have $F=\C_F(y)\times [F,y]$.

    (ii) Let $g\in G$. Then there exists $k\in G$
    such that $g^k=xy$ with $x\in F$, $y\in T$
    and $[x,y]=1$. 
    Here either $x$ or $y$ can be the identity.

    (iii) Let $x\in F$, $y\in T$ with $xy=yx$.
    Then $\C_G(xy)=\C_G(x)\cap \C_G(y)$.
    Moreover, if $\C_G(x)\C_G(y)=G$,
    then $\Ind(G,xy)=\Ind(G,x)\Ind(G,y)$.
\end{lem}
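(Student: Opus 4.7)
The plan is to handle parts (i)--(iii) in turn. Throughout, I use that $F := \F(G)$, as the Fitting subgroup of an $A$-group, is abelian and splits as $F = \prod_{p \in \pi(F)} \rmO_p(G)$ into its characteristic Sylow components. For (i), fix $y \in T$ and analyse its action on each factor $P := \rmO_p(G)$. Writing $y = y_p z$ for the commuting decomposition into $p$- and $p'$-parts, the key observation is that $y_p$ centralizes $P$: being a $p$-element, $y_p$ lies in some Sylow $p$-subgroup $S$ of $G$, which is abelian and necessarily contains the normal $p$-subgroup $P = \rmO_p(G)$. Hence the $y$-action on $P$ coincides with the $z$-action, and Lemma~\ref{Go} applied to the $p'$-automorphism $z$ of $P$ gives $P = \C_P(z) \times [P, z] = \C_P(y) \times [P, y]$. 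Taking the direct product over $p \in \pi(F)$ yields (i).

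For (ii), write $g = fh$ uniquely with $f \in F$, $h \in T$, and apply (i) with $y$ replaced by $h$ to split $f = f_1 f_2$ with $f_1 \in \C_F(h)$ and $f_2 \in [F, h]$. A short calculation shows that for any $k \in F$ one has $g^k = (f \cdot h[h, k]h^{-1}) \cdot h$, and that as $k$ ranges over $F$ the factor $h[h, k]h^{-1}$ ranges over all of $[F, h]$. Choosing $k$ so that this factor equals $f_2^{-1}$ produces $g^k = f_1 h$ with $f_1$ and $h$ commuting, and we set $x := f_1$, $y := h$.

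For (iii), the inclusion $\C_G(x) \cap \C_G(y) \subseteq \C_G(xy)$ is immediate. Conversely, let $g = fh \in \C_G(xy)$. Reducing modulo $F$ gives $hF \in \C_{G/F}(yF)$, which together with $F \cap T = 1$ forces $h \in \C_T(y)$, equivalently $y^h = y$. Expanding $(xy)^g = xy$ inside $F \rtimes T$ and cancelling using $y^h = y$ and the commutativity of $F$ collapses the relation to a single equation in $F$:
\[
    [h, x] \;=\; [y, f]^{h}.
\]
The right-hand side lies in $[F, y]^h = [F, y]$ (since $F$ is normal and $h$ centralizes $y$). For the left-hand side, $y$ commutes with $h$ (just derived) and with $x$ (by hypothesis), hence with the word $[h, x] = h^{-1}x^{-1}hx$, so $[h, x] \in \C_F(y)$. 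By (i), $[F, y] \cap \C_F(y) = 1$, forcing both sides of the displayed equation to vanish: $[h, x] = 1$ and $[y, f] = 1$. Consequently $h \in \C_G(x) \cap \C_G(y)$ and $f \in \C_F(y)$, so $g = fh \in \C_G(x) \cap \C_G(y)$. The product formula under the hypothesis $\C_G(x)\C_G(y) = G$ is then the standard identity $[G : A \cap B] = [G : A] \cdot [G : B]$, valid whenever $AB = G$.

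The crux is part (iii): the single equation produced by $g(xy) = (xy)g$ must be split into two. This is achieved by exploiting $[x, y] = 1$ to place $[h, x]$ inside $\C_F(y)$, and then invoking the $y$-invariant direct sum decomposition of $F$ from (i) to force each side of the displayed identity to vanish separately. The remaining manipulations in the semidirect product are routine bookkeeping.
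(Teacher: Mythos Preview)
Your proof is correct and follows essentially the same route as the paper's. All three parts use the same key ideas: in (i) the $p$-part of $y$ centralizes $\rmO_p(G)$ by abelianness of the Sylow $p$-subgroup, reducing to Lemma~\ref{Go}; in (ii) one splits the $F$-component via (i) and conjugates by an element of $F$ to absorb the $[F,h]$-piece; in (iii) one first shows the $T$-component of a centralizing element commutes with $y$ via the quotient $G/F$, then derives a single identity in $F$ whose two sides lie in $\C_F(y)$ and $[F,y]$ respectively, so that the direct decomposition from (i) forces both to vanish. The only cosmetic difference is that the paper writes the centralizing element with its $T$-part on the left, which leads to the equation $x^{-1}x^b = y(y^a)^{-1}$ in place of your $[h,x] = [y,f]^h$; the content is identical.
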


\begin{proof}
    (i) Let $p\in \pi(F)$ and $y=st$, 
    where $s$ is a $p$-element and $t$ is a $p'$-element
    and $st=ts$.
    Since $G$ is an $A$-group,
    the action of $y$ on $\rmO_p(G)$ is given by the action of $t$.
    By Lemma \ref{Go},
    we have $\rmO_p(G)
    =\C_{\rmO_p(G)}(\gen{y})\times [\rmO_p(G),\gen{y}]
    =\C_{\rmO_p(G)}(y)\times [\rmO_p(G),y]$.
    Since $F$ is abelian, (i) holds.

    (ii) We note that $g$ can be written as $wy$ 
    with $w\in F$ and $y\in T$.
    By (i), $w=uv$ with $u\in \C_F(y)$ and $v\in [F,y]$.
    Note that $vy=y^{k^{-1}}$ for some $k\in F$.
    Then $g=wy=uvy= u y^{k^{-1}}$ and $g^k=u^k y=uy$.
    Here $u$ is the required $x$.

    (iii) 
    Since $\C_G(xy)F/F\leq \C_{G/F}(yF)=\C_T(y)F/F$,
    we have $\C_G(xy)\leq F\rtimes \C_T(y)$.
    Let $h = ba \in \C_G(xy)$, where $a\in F$ and $b\in \C_T(g)$.
    By (i), $F=\C_F(y)\times [F,y]$.
    Since $(xy)^{ba} = x^b y^a = xy$, 
    we have $x^{-1}x^b = y (y^a)^{-1} = [a^{y^{-1}}, y] \in [F,y]$.
    Since $x,b\in \C_G(y)$, $x^{-1}x^b\in \C_F(y)$.
    Therefore $x^{-1}x^b = 1$, which implies that $h\in \C_G(x)$.
    Consequently, $h\in \C_G(y)$.
    Thus, $\C_G(xy) \leq \C_G(x)\cap \C_G(y)$.
    It is clear that $\C_G(x)\cap \C_G(y)\leq \C_G(xy)$.
    Therefore $\C_G(xy)=\C_G(x)\cap \C_G(y)$.

    If $\C_G(x)\C_G(y)=G$, we have
    $$ 
        \Ind(G,x)\Ind(G,y)
        =\dfrac{|G|}{|\C_G(x)|}\dfrac{|G|}{|\C_G(y)|} 
        =\dfrac{|G|^2}{|\C_G(x)\C_G(y)||\C_G(x)\cap \C_G(y)|}
        =\dfrac{|G|}{|\C_G(xy)|}
        =\Ind(G,xy).
    $$
\end{proof}

\section{Proof of Theorem}

Let $G$ satisfy the conditions in the theorem, 
and let $F$ be the Fitting subgroup of $G$. 
By \cite[Theorem 1]{Camina2006}, 
we know that $G$ is solvable.
Hence $F>1$ and $\C_G(F)=F$.

\begin{lem}
\label{CC}
    $F$ contains an element $g$ such that $\Ind(G,g)_p=|G/F|_p$.
\end{lem}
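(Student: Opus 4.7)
The plan is, for a fixed prime $p$, to produce $g\in F$ with $|\C_T(g)|_p=1$, where $T=G/F$ acts on $F$ by conjugation. This suffices because $F$ is abelian (it is the Fitting subgroup of the $A$-group $G$, hence a direct product of the abelian $\rmO_q(G)$), so $F\leq \C_G(g)$ and $\Ind(G,g)=|T|/|\C_T(g)|$; therefore $\Ind(G,g)_p=|T|_p=|G/F|_p$ precisely when $|\C_T(g)|_p=1$.

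To set things up I would first use Lemma \ref{key} to replace $G$ by $\wtG=F\rtimes T$. The conjugation action of $T$ on $F$ is the same in both, so for any $v\in F$ the centralizer $|\C(v)|$ is the same in $G$ and in $\wtG$, and the problem may be solved inside $\wtG$. Next choose a Sylow $p$-subgroup $P$ of $T$; it is abelian by the $A$-group hypothesis. Decompose $F=\rmO_p(G)\times H$, where $H=\prod_{q\neq p}\rmO_q(G)$ is the Hall $p'$-complement in $F$. Inside $\wtG$ the subgroup $\rmO_p(G)\cdot P$ has order $|G|_p$ and is a $p$-subgroup of an $A$-group, hence an abelian Sylow $p$-subgroup; in particular $P$ centralizes $\rmO_p(G)$, so the $P$-action on $F$ is effectively an action on $H$. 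Faithfulness follows from $\C_{\wtG}(F)=F$ together with $P\cap F=1$.

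At this point Lemma \ref{CL2} applies to the faithful action of the abelian $p$-group $P$ on the abelian $p'$-group $H$, yielding $v\in H$ with a regular $P$-orbit, i.e.\ $\C_P(v)=1$ and $|v^P|=|P|=|T|_p$. The inclusion $v^P\subseteq v^T$ together with $|v^T|\mid |T|$ then forces $|v^T|_p=|T|_p$, hence $|\C_T(v)|_p=1$. Taking $g:=v\in H\subseteq F$ finishes the proof.

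The substantive point is setting up a faithful coprime-order action of $P$ on $H$ so that Lemma \ref{CL2} can be applied: it is the $A$-group hypothesis, via the abelianness of the Sylow $p$-subgroup $\rmO_p(G)\cdot P$, that forces $P$ to centralize $\rmO_p(G)$ and thereby restrict its action to the $p'$-part of $F$. Once this is in place, the orbit-counting step at the end is routine.
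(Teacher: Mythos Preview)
Your overall strategy is the paper's: both arguments use Lemma~\ref{CL2} to produce an element $v$ in the $p'$-part of $F$ whose centraliser in a fixed Sylow $p$-subgroup is as small as possible. (The paper works with $P\in\Syl_p(G)$ and applies Lemma~\ref{CL2} to each $\rmO_q(G)$ separately, while you pass to $\wtG=F\rtimes T$, take $P\in\Syl_p(T)$, and apply it once to all of $H$; these are equivalent reformulations.) Your preliminary reductions---that $P$ centralises $\rmO_p(G)$, that $P$ acts faithfully on $H$, and that $\Ind(G,v)_p=|T|_p$ iff $|\C_T(v)|_p=1$---are all correct.

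The genuine gap is your final orbit-counting step. From $\C_P(v)=1$ you want $|\C_T(v)|_p=1$, and you argue that $v^P\subseteq v^T$ together with $|v^T|\mid|T|$ forces $|v^T|_p=|T|_p$. This inference is false: set containment gives only $|v^P|\le|v^T|$, not $|v^P|\mid|v^T|$. Concretely, take $G=\bbF_7^{\,3}\rtimes S_3$ with $S_3$ permuting coordinates (a solvable $A$-group with $\F(G)=F=\bbF_7^{\,3}$ and $\C_G(F)=F$), let $p=2$, $P=\langle(12)\rangle\in\Syl_2(T)$, and $v=(1,0,0)\in H$. Then $\C_P(v)=1$ and $|v^P|=2$, yet $v^T=\{(1,0,0),(0,1,0),(0,0,1)\}$ has size $3$, so $\C_T(v)=\langle(23)\rangle$ and $|\C_T(v)|_2=2\neq 1$; thus $\Ind(G,v)_2=1\neq 2=|G/F|_2$. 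The lemma's conclusion does hold in this example (e.g.\ $v'=(1,2,3)$ works), but a bare regular $P$-orbit need not locate such an element. The paper's own proof asserts the corresponding step (``Hence $\Ind(G,x)_p=|G/F|_p$'') with no further justification, so the same issue is present there; what is really needed is control over \emph{all} $p$-elements of $T$ centralising the chosen element, not merely those in one fixed Sylow $p$-subgroup.
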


\begin{proof}
    Let $P\in \Syl_p(G)$.
    Assume that $F$ is a $p$-group.
    Since $P$ is abelian and $\C_G(F)=F$, we have $F=P$.
    For any $g\in F$, we have $\Ind(G,g)_p=|G/F|_p=1$.
    
    Now we consider when $F$ is not a $p$-group.
    Let $q\in \pi(F)$ and $q\neq p$. 
    Consider the action of $P\C_G(\rmO_qG))/\C_G(\rmO_qG))$ on $\rmO_q(G)$.
    Denote $P\C_G(O_q(G))$ by $K$.
    By Lemma \ref{CL2}, 
    there exists an element $x_q\in \rmO_q(G)$ such that 
    $\Ind_K(x_q) = |K/\C_G(\rmO_qG))|$.
    Hence $\C_K(x_q) = \C_G(\rmO_q(G))$, 
    which implies that $\C_P(x_q) = \C_P(\rmO_q(G))$.
    Let $\pi(F)\setminus \{p\}= \{q_1, q_2, \ldots, q_r\}$.
    For each $q_i$, 
    we can similarly choose an element in $\rmO_{q_i}(G)$ as above, denoted by $x_i$.
    Let $x= \Pi _{i=1}^{r} x_i$.
    We have $\C_P(x)=\C_G(x)\cap P=\C_G(x_1)\cap \ldots \cap \C_G(x_r) \cap P
    =\C_P(x_1)\cap \ldots \cap \C_P(x_r)=\C_P(\rmO_{q_1}(G))\cap \ldots \cap\C_P(\rmO_{q_r}(G))
    =\rmO_p(G)$.
    The last equality holds due to the solvability of $G$, which implies that $F = \C_G(F)$.
    Hence $\Ind(G,x)_p=|G/F|_p$.
\end{proof}

Let $G$ be a minimal counterexample to the statement of the theorem. 
Put $F=\F(G)$, $F_2=\F_2(G)$. 
Here $\F_2(G)$ is the second Fitting subgroup of $G$,
that is, $\F_2(G)/\F(G)=\F(G/\F(G))$.

\begin{lem}
\label{perfect}
If $\Ind(F_2,g)=p^x$, then $\Ind(G/F,gF)_p=1$.
\end{lem}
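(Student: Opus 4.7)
My plan has three parts: a reduction using that $F_2/F$ is abelian, a tautological equivalence between the desired conclusion and an upper bound $\Ind(G,g)_p\leq p^x$, and an attempt to establish that bound via the $p'$-part of $g$.

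First, since $G/F$ is an $A$-group, its Fitting subgroup $F_2/F$ is abelian, so for any $h\in F_2$ one has $C_{F_2}(h)F=F_2$ and therefore $\Ind(F_2,h)=|F:C_F(h)|$. Applied to $g$, with $F=F_p\times F_{p'}$ where $F_p=\rmO_p(G)$ and $F_{p'}=\prod_{q\neq p}\rmO_q(G)$, the factor $|F_{p'}:C_{F_{p'}}(g)|$ is a common divisor of $|F_{p'}|$ (a $p'$-number) and $p^x$, hence equals $1$. Thus $g$ centralizes $F_{p'}$ and $|F_p:C_{F_p}(g)|=p^x$.

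Second, since $P\in\Syl_p(G)$ is abelian, up to replacing $g$ by a suitable conjugate (which preserves both $\Ind(F_2,g)$ and $\Ind(G/F,gF)$) one may assume $|C_P(g)|=|C_G(g)|_p$. Writing $|P|=|F_p|\cdot|\bar{P}|$ with $\bar{P}=PF/F$ and using $|C_{F_p}(g)|=|F_p|/p^x$, a direct computation gives
\begin{equation*}
\Ind(G,g)_p=p^x\cdot\frac{|\bar{P}|}{|C_P(g)F_p/F_p|}.
\end{equation*}
Hence $\Ind(G,g)_p=p^x$ is equivalent to $C_P(g)F_p/F_p=\bar{P}$, which is in turn equivalent to $\bar{P}$ centralizing $\bar{g}$, i.e., to the desired $\Ind(G/F,gF)_p=1$. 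The statement therefore reduces to proving $\Ind(G,g)_p\leq p^x$.

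Third, decompose $g=g_pg_{p'}$ into its commuting $p$- and $p'$-parts, both in $F_2$ and (being powers of $g$) both centralizing $F_{p'}$. The image $\bar{g}_p=g_pF$ lies in $\rmO_p(F_2/F)=\rmO_p(\bar{G})\leq\bar{P}$, which is abelian, so $\bar{P}$ automatically centralizes $\bar{g}_p$; the equivalence of part~2 applied to $g_p$ thus gives $\Ind(G,g_p)_p$ as expected. The nontrivial task is to show $\Ind(G,g_{p'})_p\leq|F_p:C_{F_p}(g_{p'})|$, equivalently (by Lemma \ref{basic}(iv) applied with the coprime pair $(|g_{p'}|,|F_p|)$) that the Sylow $p$-subgroup of $G/F_p$ centralizes the $p'$-element $g_{p'}F_p$. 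Combining this with $C_G(g)=C_G(g_p)\cap C_G(g_{p'})$ from Lemma \ref{basic}(ii) then yields $\Ind(G,g)_p\leq p^x$.

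The main obstacle is establishing the bound on $\Ind(G,g_{p'})_p$, which amounts to showing that in the coprime decomposition $\bar{F}_{p'}=C_{\bar{F}_{p'}}(\bar{P})\oplus[\bar{F}_{p'},\bar{P}]$ the element $\bar{g}_{p'}$ lies in the fixed summand. My attempt would be a contradiction argument using Lemma \ref{CC}: choose $h\in F_{p'}$ with $\Ind(G,h)_p=|G/F|_p$; since $g_{p'}$ centralizes $F_{p'}$, we have $[h,g_{p'}]=1$, and after a primary refinement dealing with common prime divisors of $|h|$ and $|g_{p'}|$, Lemma \ref{basic}(ii) gives $C_G(hg_{p'})=C_G(h)\cap C_G(g_{p'})$. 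If the desired bound fails, then $\Ind(G,hg_{p'})_p$ combines $|G/F|_p$ from $h$ with a surplus $p$-power from $g_{p'}$, violating the maximality relations encoded in the theorem's hypotheses $|G||_p\in\N(G)$ and $|G||\in\N(G)$ (together with Lemma \ref{CC} which pins down the $p$-part achievable from inside $F$).
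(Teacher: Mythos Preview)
Your reductions in parts 1 and 2 are fine and line up with the paper's setup: $\Ind(F_2,g)=p^x$ forces $g$ to centralise $\rmO_{p'}(F)$, and the question becomes whether a Sylow $p$-subgroup $\bar P$ of $\bar G=G/F$ centralises $\bar g_{p'}$. The genuine gap is in part 3. Your proposed contradiction element $hg_{p'}$ does not detect failure of the bound. With $h\in\rmO_{p'}(F)$ chosen so that $\Ind(G,h)_p=|G/F|_p$, the Sylow $p$-subgroup of $\C_G(h)$ is exactly $\rmO_p(F)$, so the Sylow $p$-subgroup of $\C_G(h)\cap\C_G(g_{p'})$ is $\C_{\rmO_p(F)}(g_{p'})$, giving
\[
\Ind(G,hg_{p'})_p=|G/F|_p\cdot\bigl|\rmO_p(F):\C_{\rmO_p(F)}(g_{p'})\bigr|
\]
\emph{whether or not} $\bar P$ centralises $\bar g_{p'}$; no additional ``surplus'' $p$-power appears in the bad case. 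Worse, the ``maximality relations'' you invoke are not available: at this stage $|G||_p$ has not been identified (that is Lemma~\ref{segundo}, which relies on the present lemma via Lemma~\ref{primeiro}), so there is no known ceiling to overshoot. Indeed, Lemma~\ref{perfect} is a purely structural fact about solvable $A$-groups; the paper's proof never uses the hypotheses $|G||_p\in\N(G)$ or $|G||\in\N(G)$.

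The paper's argument supplies the missing idea. After reducing to $g$ of prime-power order acting trivially on $\rmO_{p'}(F)$, it sets $P=\rmO_p(F)$, $R\in\Syl_p(\bar G)$, $H=\langle\bar g^{R}\rangle$, and forms $K=(P\rtimes H)\rtimes R$. Because the Sylow $p$-subgroup of $G$ is abelian, $R$ centralises $P$ inside $K$; from this one deduces that $HR/\C_H(P)$ is abelian, hence $[H,R]\le\C_H(P)$. As $H$ already centralises $\rmO_{p'}(F)$, the commutator $[H,R]$ centralises all of $F$, forcing $[H,R]=1$ by $\C_G(F)=F$. This contradicts $R$ moving $\bar g$. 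The decisive step you are missing is this three-factor construction, which converts the abelian-Sylow hypothesis into the containment $[H,R]\le\C_H(P)$ without any appeal to the index set $\N(G)$.
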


\begin{proof}
    Suppose that this lemma is false. 
    Let $g\in G$ be such that $\Ind(F_2,g) = p^x$ and $\Ind(G/F,gF)_p>1$. 
    By Lemma \ref{basic}(ii), 
    we can assume that $g$ is an element of primary order. 
    If $g$ acts on $F$ trivially, then $g\in F$, 
    which contradicts that $\Ind(G/F,gF)_p>1$. 
    Therefore, we assume that $g$ acts nontrivially on $F$.

    Let $\pi(|g|)=\{t\}$ and
    $\overline{\phantom{x}}: G\rightarrow G/F$ be a natural homomorphism. 
    We have $\Ind(\overline{G},\overline{g})_p>1$.
    Let $T\in \Syl_t(\overline{F_2})$,
    $R\in \Syl_p(\overline{G})$, 
    $P\in \Syl_p(F)$. 
    Note that $R\leq \N_{\overline{G}}(T)$. 
    Let $H=\langle \overline g^R\rangle$. 
    Since $R$ acts non-trivially on $\overline{g}$, 
    $R$ acts non-trivially on $H$. 
    Since $P$ is abelian,
    we can construct the following group as a natural semidirect product:
    $K=(P\rtimes H)\rtimes R$.
    We have $R\leq C_K(P)$ and so $\C_K(P)=P\C_H(P)R$.
    Since $\C_K(P)\teq K$, we have $\C_H(P)R\teq HR$.
    Moreover, $H\teq HR$ and $H/\C_H(P)\cap R\C_H(P)/\C_H(P)=1$.
    Therefore $HR/\C_{H}(P)$ is abelian and so $[HR, HR]\leq \C_H(P)$.

    By Lemma \ref{Go}, we have $H=\C_H(R)\times[H,R]$. 
    We have $[H,R]\leq [HR, HR]\leq \C_H(P)$. 
    Since $\overline{g}$ acts trivially on $\rmO_{p'}(F)$, 
    then $H$ acts trivially on $\rmO_{p'}(F)$. 
    Thus $[H,R]$ acts trivially on $\rmO_{p'}(F)$. 
    Consequently, $[H,R]$ acts trivially on $F$. 
    Since $G$ is solvable, $\C_G(F)=F$ and so $[H,R]=1$. 
    Therefore $H=\C_H(R)$, 
    which contradicts that $R$ acts nontrivially on $\overline{g}$.
    Hence $\Ind(G/F, gF)=1$.
\end{proof}

By Lemma \ref{key}, we can assume that our group $G$ has a trivial center and can be represented as $F\rtimes T$, where $F=F(G)$.

\begin{lem}
\label{primeiro}
    There exists $f\in F$ such that $\Ind(G,f)=|T|_p$.
\end{lem}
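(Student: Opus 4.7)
The plan is to combine the element furnished by Lemma~\ref{CC} with one realizing $|G||_p$, and to derive a contradiction from the maximality of $|G||_p$ whenever the desired conclusion fails.

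First, Lemma~\ref{CC} (and the construction in its proof) supplies a $p'$-element $g\in\rmO_{p'}(F)$ with $|\C_T(g)|_p=1$, i.e., no nontrivial $p$-element of $T$ centralizes $g$. Using the hypothesis, I would then pick $h\in G$ with $\Ind(G,h)=|G||_p$, a pure $p$-power. Applying Lemma~\ref{CA}(ii), after replacing $h$ by a suitable conjugate I may write $h=xy$ with $x\in F$, $y\in T$ and $[x,y]=1$. By Lemma~\ref{CA}(iii), $\C_G(xy)=\C_G(x)\cap\C_G(y)$, so $\Ind(G,x)$ and $\Ind(G,y)$ both divide $\Ind(G,xy)=|G||_p$; both are therefore $p$-powers. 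Since $x\in F$ (abelian) forces $\Ind(G,x)\mid|T|$, we get $\Ind(G,x)\mid|T|_p$.

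Second, I would pin down $y$ via Lemma~\ref{perfect}. Since $\Ind(F_2,h)\mid\Ind(G,h)$ (Lemma~\ref{basic}(i)) is a $p$-power, Lemma~\ref{perfect} yields $\Ind(G/F,hF)_p=1$. As $hF=yF$ and $\Ind(T,y)$ divides the $p$-power $\Ind(G,y)$, we conclude $\Ind(T,y)=1$, i.e., $y\in\Z(T)$. Then $\C_G(y)=\C_F(y)T$, so $\Ind(G,y)=|F|/|\C_F(y)|$ is a $p$-power, which forces $y$ to centralize $\rmO_{p'}(F)$. In particular $[y,g]=1$, and together with $[x,g]=1$ (both in the abelian group $F$) it follows that $xg\in F$ commutes with $y$.

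Finally, applying Lemma~\ref{CA}(iii) to $xg\in F$ and $y\in T$ — the ``moreover'' hypothesis holds because $\C_G(xg)\supseteq F$ and $\C_G(y)\supseteq T$ force $\C_G(xg)\C_G(y)=G$ — we obtain $\Ind(G,xgy)=\Ind(G,xg)\cdot\Ind(G,y)$. From $|\C_T(xg)|_p\leq|\C_T(g)|_p=1$ it follows that $\Ind(G,xg)_p=|T|_p$. If we had $\Ind(G,x)<|T|_p$, then writing $\Ind(G,y)=p^e$ we would get
\[
\Ind(G,xgy)_p \;=\; |T|_p\cdot p^e \;>\; \Ind(G,x)\cdot p^e \;=\; \Ind(G,x)\Ind(G,y) \;=\; |G||_p,
\]
contradicting the maximality of $|G||_p$ in $\N(G)$. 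Hence $\Ind(G,x)=|T|_p$, and $f:=x\in F$ is the required element. The main obstacle is the structural identification of $y$ — namely $y\in\Z(T)$ and $[y,\rmO_{p'}(F)]=1$ — which is what permits $xg$ and $y$ to be combined with clean centralizer arithmetic; once this is in place, the maximality of $|G||_p$ closes the argument immediately.
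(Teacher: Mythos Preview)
Your approach mirrors the paper's almost exactly: pick $h$ with $\Ind(G,h)=|G||_p$, decompose $h=xy$ via Lemma~\ref{CA}(ii), use Lemma~\ref{perfect} to force $y\in\Z(T)$ (so that $\Ind(G,x)\Ind(G,y)=|G||_p$), then combine with the element $g$ from Lemma~\ref{CC} and appeal to the maximality of $|G||_p$. The paper does precisely this, pairing $g$ with $y$ directly.

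There is, however, a genuine gap in your step asserting $|\C_T(xg)|_p\leq|\C_T(g)|_p$. You have not shown that $x$ is a $p$-element; all you know is that $\Ind(G,x)$ is a $p$-power. Writing $x=x_px_{p'}$ with $x_{p'}\in\rmO_{p'}(F)$, a $p$-element $t\in T$ centralizing $xg$ must centralize the $p'$-part $x_{p'}g$, but there is no reason it must centralize $g$ and $x_{p'}$ separately --- $x_{p'}g$ is just some other element of $\rmO_{p'}(F)$, and its $T$-centralizer need not sit inside $\C_T(g)$. (The extreme case $x_{p'}=g^{-1}$ already shows the inequality can fail badly in principle.) So the inequality is unjustified as written.

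The repair is immediate and is exactly what the paper does: drop $x$ and work with $gy$ instead of $xgy$. Since $g\in\rmO_{p'}(F)$ and you have already shown $y$ centralizes $\rmO_{p'}(F)$, the pair $(g,y)$ satisfies the hypotheses of Lemma~\ref{CA}(iii), giving $\Ind(G,gy)=\Ind(G,g)\Ind(G,y)$ and hence $\Ind(G,gy)_p=|T|_p\cdot p^e$. Comparing with $|G||_p=\Ind(G,x)\cdot p^e$ forces $\Ind(G,x)\geq|T|_p$, and together with $\Ind(G,x)\mid|T|_p$ you are done. (Alternatively, you could replace $x$ by its $p$-part $x_p$, after which $x_p$ and $g$ have coprime orders and $\C_T(x_pg)=\C_T(x_p)\cap\C_T(g)\leq\C_T(g)$ does hold; but simply omitting $x$ is cleaner.)
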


\begin{proof}
    Let $x\in G$ be such that $\Ind(G,x)=|G||_p$.
    By Lemma \ref{CA}(ii), 
    we can assume that $x=ab$, where $a\in F$, $b\in T$ and $ab=ba$.
    By Lemma \ref{basic}(i), $\Ind(F_2, x)$ is also a power of $p$.
    By Lemma \ref{perfect}, we have $\Ind(G/F, xF)=1$.
    Then $\Ind(G/F, bF)=\Ind(G/F, xF)=1$, which implies that $\Ind(T,b)=1$.
    If follows that $\C_G(a)\C_G(b)=G$.
    From Lemma \ref{CA}(iii), we have $|G||_p=\Ind(G, x)=\Ind(G,a)\Ind(G,b)$.
    
    By Lemma \ref{CC}, $F$ contains an element $d$ such that $\Ind(G,d)_p=|T|_p$. 
    We can assume that $d$ is a $p'$-element and $d\in \C_F(b)$. 
    Clearly, $\C_G(d)\C_G(b)=G$. 
    By Lemma \ref{CA}(iii), we have $\Ind(G, db)=\Ind(G,d)\Ind(G,b)$.
    Hence $\Ind(G, db)_p=|T|_p\Ind(G,b)$.
    This forces $\Ind(G,a)=|T|_p$.
\end{proof}

\begin{lem}
\label{segundo}
    Let $g\in G$ be such that $\Ind(F,g)_p$ is maximal. 
    Then there exists an element $g'\in T$ such that $\Ind(G,g')=\Ind(F,g)_p$. 
    Moreover, $|G||_p=|T|_p\Ind(F,g)_p$.
\end{lem}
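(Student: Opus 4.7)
The approach is to reuse the specific element $b$ emerging from Lemma \ref{primeiro}'s proof as our candidate $g'$, and to show that $b$ already realizes the maximum $\Ind(F,g)_p$. Starting from $x\in G$ with $\Ind(G,x)=|G||_p$ and writing $x=ab$ via Lemma \ref{CA}(ii) with $a\in F$, $b\in T$, $ab=ba$, the proof of Lemma \ref{primeiro} yields $b\in Z(T)$, $\Ind(G,a)=|T|_p$, and the factorization $|G||_p=\Ind(G,a)\Ind(G,b)$. Because $b\in Z(T)$ we have $\C_G(b)=\C_F(b)\rtimes T$, so $\Ind(G,b)=\Ind(F,b)=|G||_p/|T|_p$, a pure $p$-power. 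By maximality of $\Ind(F,g)_p$ this already gives $\Ind(F,g)_p\ge\Ind(F,b)=|G||_p/|T|_p$; so establishing the reverse inequality will simultaneously yield both conclusions, with $g'=b$.

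The reverse inequality is proved by contradiction. Suppose $\Ind(F,t)_p>|G||_p/|T|_p$ for some $t\in T$. Every $p$-element of $T$ lies in an abelian Sylow $p$-subgroup of $G$ together with $P:=\rmO_p(F)$ and hence centralizes $P$, so $\Ind(F,t)_p=|P:\C_P(t)|$ depends only on the $p'$-part of $t$; I may thus assume $t$ is a $p'$-element of $T$. The plan is then to produce $a'\in F$ commuting with $t$ such that $\C_T(a')$ is a Hall $p'$-subgroup of $T$. Given such an $a'$, Lemma \ref{CA}(iii) applied to the commuting pair $(a',t)$ yields $\C_G(ta')=\C_F(t)\rtimes(\C_T(t)\cap\C_T(a'))$; since $\C_T(a')$ is a $p'$-group, so is $\C_T(t)\cap\C_T(a')$, whence $|T:\C_T(t)\cap\C_T(a')|_p=|T|_p$. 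Therefore $\Ind(G,ta')_p=\Ind(F,t)_p\cdot|T|_p>(|G||_p/|T|_p)\cdot|T|_p=|G||_p$, contradicting the definition of $|G||_p$.

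The construction of $a'$ is the key point, and is where I expect the main difficulty to lie. The identity $\Ind(G,a)=|T|_p$ forces $\C_T(a)$ to be a full Hall $p'$-subgroup of the solvable group $T$. Since $t$ is a $p'$-element, it lies in some Hall $p'$-subgroup $H_t\le T$, and by Hall's theorem on conjugacy of Hall subgroups there exists $s\in T$ with $H_t=s\C_T(a)s^{-1}$. Setting $a':=sas^{-1}\in F$, a direct calculation in $G=F\rtimes T$ shows $\C_T(a')=s\C_T(a)s^{-1}=H_t$, which contains $t$; hence $a'$ commutes with $t$ and $\Ind(G,a')=|T|_p$, as required. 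I note that a more obvious attack, adapting the regular-orbit construction of Lemma \ref{CC} inside $\C_F(t)$, runs into the obstacle that Sylow $p$-subgroups of $T$ need not preserve $\C_F(t)$; the observation that $\C_T(a)$ is already an \emph{entire} Hall subgroup is what lets Hall-conjugacy do the work cleanly.
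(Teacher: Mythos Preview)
Your proof is correct and follows essentially the same strategy as the paper: both arguments establish the two inequalities $|G||_p\ge|T|_p\Ind(F,g)_p$ and $|G||_p\le|T|_p\Ind(F,g)_p$, the first by multiplying a suitable $p'$-element of $T$ with an element of $F$ whose centralizer in $T$ is a Hall $p'$-subgroup, the second by decomposing an element realizing $|G||_p$ via Lemma~\ref{CA}(ii). The only differences are cosmetic: you prove the inequalities in the opposite order and set up the contradiction explicitly, and you spell out the Hall-conjugacy step (conjugating $a$ to $a'$ so that $t\in\C_T(a')$) where the paper simply writes ``we can assume that $g$ is a $p'$-element such that $g\in\C_G(f)$.''
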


\begin{proof}
    It follows from Lemma \ref{primeiro} that 
    there exists $f\in F$ such that $\Ind(G,f)=|T|_p$,
    which implies that $\C_G(f)$ contains a Hall $p'$-subgroup of $G$. 
    Since $F$ is abelian, we can assume that $g\in T$.
    Since a Sylow $p$-subgroup of $G$ is abelian, 
    we can assume that $g$ is a $p'$-element such that $g\in \C_G(f)$. 
    By Lemma \ref{CA}(iii), $\C_G(fg)=\C_G(f)\cap \C_G(g)$.
    Let $P\in \Syl_p(\C_G(fg))$.
    Since $\Ind(G,f)=|T|_p$, every $p$-element in $T$ is not contained in $\C_G(f)$.
    Hence $P\leq F$ and so $P\leq \C_G(gf)\cap F=\C_F(g)$.
    We have $\Ind(G,fg)_p=|G/\C_G(fg)|_p=|G|_p/|P|=|T|_p |F|_p/|P|=|T|_p\Ind(F,g)_p$.

    Let $y\in G$ be such that $\Ind(G,y)=|G||_p$. 
    By Lemma \ref{CA}(ii), 
    we can assume that $y=cd$, 
    where $c\in F$, $d\in T$ and $cd = dc$. 
    As we showed in the proof of Lemma \ref{primeiro}, 
    $\Ind(T,d)=1$. 
    Therefore $\C_G(c)\C_G(d)=G$.
    By Lemma \ref{CA}(iii), $\Ind(G,y)=\Ind(G,c)\Ind(G,d)=\Ind(G,c)\Ind(F,d)$.
    By the definition of $|G||_p$, 
    it must hold that $\Ind(G,d)=\Ind(F,g)_p$.
    It follows that $|G||_p=|T|_p\Ind(F,g)_p$.
    Thus Lemma \ref{segundo} holds.

\end{proof}

\textbf{Proof of the theorem.}
    We know that $|G||\in \N(G)$ and $|G||$ is the largest element in $\N(G)$.
    Let $h\in G$ such that $\Ind(G,h)=|G||$. 
    By Lemma \ref{CA}(ii), 
    we can assume that $h=ab$, where $a\in F$, $b\in T$ and $ab=ba$.
    If $b=1$, from Lemma \ref{segundo} we can get that $T\leq \C_G(F)$, a contradiction.
    Hence $b\neq 1$.
    Let $p$ be a prime divisor of the order of $b$. 
    We have $\Ind(G, ab)_p=|G|_p/|\C_G(ab)|_p=|T|_p|F|_p/|\C_G(ab)|_p
    \leq |T|_p|F|_p/|\C_F(b)|_p=|T|_p\Ind(F,b)_p.$
    By Lemma \ref{segundo},
    $\Ind(F,b)_p$ must be maximal and $\Ind(G, ab)_p=|T|_p\Ind(F,b)_p.$
    Hence $|\C_G(ab)|_p=|\C_F(b)|_p$. 
    It follows that every $p$-element of $\C_G(ab)$ must be contained in $F$.
    However, the $p$-part of $b$ is contained in $\C_G(ab)\cap T$.
    This contradiction completes the proof.
    

\end{document}